\DeclareMathOperator{\Div}{div}	
\newcommand{\norm}[1]{\left\|#1\right\|}   
\newcommand{\abs}[1]{\left\lvert#1\right\rvert}  
\newcommand{\field}[1]{\mathbb{#1}}
\newcommand{\vektor}[1]{{\mathbf{#1}}} 
\newcommand{\vek}[1]{\vec{#1}} 
\def\ro{\varrho}  
\def\en{\vektor{n}}  
\def\ef{\vektor{f}}
\def\sumk{\sum_{k=1}^{n}}
\newcommand{\R}{\mathbb{R}}
\def\u{\vektor{u}} 
\newcommand{\de}[1]{\mathrm{d}{#1}}     
\def\dx{\de{x}}	
\newcommand{\inte}[1]{\int\limits_{\Omega}{#1}\, \dx  }
\newcommand{\inth}[1]{\int_{\partial\Omega}{#1}\, \de{S}  }
\newcommand{\refx}[1]{(\ref{#1})}
\def\qqquad{\quad\quad\quad} 
\theoremstyle{plain}
\newtheorem{thm}{Theorem}
\newtheorem{lem}{Lemma}
\newtheorem{remark}{Remark}
\title{Steady solutions to a model of compressible chemically reacting fluid with high density}
\author[1]{\v{S}imon Axmann}
\author[2]{Milan Pokorn\'{y}\small}
\affil[1]{Department of Mathematics, University of Chemistry and Technology, 166 28 Praha 6, Czech Republic}
\affil[2]{Charles University, Faculty of Mathematics and Physics, Sokolovsk\'{a} 83, 186 75 Praha 8, Czech Republic\\
e-mail: {\tt pokorny@karlin.mff.cuni.cz} }
\begin{document}

\maketitle

\medskip
{\bf Acknowledgement:} The work of M.P. was supported by the grant of the Czech Science
Foundation No. 19-04243S.

\begin{abstract}
We consider a model describing the steady flow of compressible heat-conducting chemically-reacting multi-component mixture.
We show the existence of strong solutions  under the additional assumption that the mixture is sufficiently dense. We work in the $L^p$-setting combining the methods for the weak solutions with the method of decomposition. The result is a generalization of \cite{AMP16} and \cite{AMP18}, where the case of single-constituted fluid was studied.
\end{abstract}

{\bf Keywords:}
steady compressible Navier--Stokes--Fourier system; chemically reacting mixtures; entropy inequality; strong solutions


\section{Introduction}


Recently, in mathematical fluid mechanics, it is possible to track increasing interest in models dealing with mixtures of different kinds. This is connected, indeed, with applications, since these models can describe processes which are important in many different areas, in chemistry, food industry, in pharmacology etc.

In addition, within the last decades, much better understanding of simpler models of compressible single-constituted fluids from the point of view of mathematical analysis allowed to deal also with much more complicated models which contain additionally to fluid mechanics (or thermodynamics) equations describing compressible fluid flow also further terms and equations which make possible to describe the behaviour of mixtures, even in the case when the components undergo chemical reactions.

In many applications the properties of weak solutions are too weak to be applicable, e.g. in the convergence analysis for numerical schemes. On the other hand, classical solutions typically exist only in situations when either the data are very small (so small that from the point of view of applications such results are often not of much interest) or, in case of evolutionary problem, on very short time intervals. Again, the latter usually means intervals of length $10^{-10}$ s, which, again is usually too short to have any reasonable application.

In this paper we want to study a problem which is somewhere in the middle of the results for weak and classical solutions. Based on previous results in this directions, obtained for steady isentropic compressible flow as well as steady flow of heat-conducting fluid flow of single constituted gas, we aim to extend these results also for a certain system of partial differential equations describing the steady flow of chemically reacting mixture of heat-conducting gases. The main point is that we obtain strong solutions to our system, i.e. solutions which solve the corresponding system of partial differential equations a.e., under the assumptions that the prescribed total mass of the mixture is sufficiently large. Indeed, in a sense this condition is similar to small data results, however, our assumptions allow to obtain solutions which are not just small perturbations of the flow with zero velocity and constant density and temperature.

Indeed, there is a price to pay for this. We have to restrict ourselves to a very specific assumptions on the model with the hope that we may later on extend these or similar results to more complex models.

\section{Formulation of the problem, known result}


Our model is based on the assumption that we take only the barycentric velocity into account and do not study separate velocities for each constituent. Therefore only one equation (or three as we work in the three--dimensional space) for the linear momentum appears. Similarly, we assume that the mixture is in the thermodynamic equilibrium in the sense that only one temperature and one specific internal energy is considered, even though these quantities may be different at different places. Therefore we deal with only one equation for the total energy of the whole mixture. To summarize, we consider the following steady version of a model of chemically reacting mixture with $n$ constituents in a bounded smooth domain $\Omega\subset \field{R}^3$
\begin{align}
     \Div(\ro \u) = &\:  0, \label{CEs} \\
 \Div(\ro \u\otimes\u) - \Div \field S  +   \nabla  \pi(\ro,\theta)=&\: \ro\mathbf{f},   \label{MEs} \\
 \Div(\ro e \u) + \Div \mathbf{Q}  =&\: \field S :\field D(\u)- \pi(\ro,\theta)\Div\u ,\label{EEs}\\
 \Div(\ro Y_k \u) + \Div \mathbf{F}_k  =&\: \ro m_k\omega_k, \quad k=1,\ldots,n. \label{Yks}
\end{align}
The unknowns are: density $\ro$: $\Omega\to (0,\infty)$, velocity field $\u$: $\Omega \to \field{R}^3$, absolute temperature $\theta$: $\Omega \to (0,\infty)$, and $Y_k$: $\Omega \to [ 0,1 ]$, $k=1,2,\dots, n$; mass fraction $Y_k$ corresponds to the $k$-th constituent; it is related to the density of the $k$-th constituent by $\ro_k = \ro Y_k$. Thus, since $\sumk \ro_k = \ro$, we demand $\sumk{Y_k} = 1$.

Above and in what follows, we use the following notation. Scalar-valued quantities (as e.g. the density $\ro$) are printed with the standard font. The vector-valued quantities $\Omega \to \field{R}^3$ (as e.g. the velocity $\u$) are printed in bold while the vector valued quantities $\Omega \to \field{R}^n$ (as e.g. $\vec Y = (Y_1,Y_2,\dots, Y_n)$) are printed with the arrow. Finally, the tensor-valued functions (as e.g. the stress tensor $\field{S}$) are printed with a special font.

The specific external force $\ef$: $\Omega \to \field{R}^3$ is given, as well as
the molar production rates $\omega_k = \omega_k(\vek{Y},\theta)$: $[0,1]^n \times (0,\infty)\to \field{R}$, $k=1,2,\dots, n$, are assumed to be given bounded functions such that for any admissible choice of $\vek{Y}$ and $\theta$ they satisfy
\begin{equation}\sumk m_k\omega_k=0.
\label{sumomeg}
 \end{equation}
This assumption means that no mass is created during the chemical reactions and the system can stay in equilibrium.
 Unfortunately, it is necessary for our approach to take all molar masses $m_k$ equal, for simplicity we choose $m_k=1$.
  In order to guarantee the non-negativity of the entropy production we also need to assume that (pointwisely, in the domain of the functions)
   \begin{equation}
\sumk \omega_k(\vek{Y},\theta) g_k(\ro,\vek{Y},\theta)\leq 0.\label{omega}
\end{equation}

The rest of the functions in system \refx{CEs}--\refx{Yks} will be also given functions of the unknowns through the following constitutive relations.

The viscous part of the stress tensor $\field S$ is a linear function with respect to the symmetric gradient of the velocity with density dependent viscosity, for the sake of simplicity we use
 \begin{equation}
\field S(\ro,\nabla\u) = 2\ro \field D(\u), \qquad \field D(\u) = \frac12 \bigl(\nabla \u + \nabla \u^T\bigr). \label{S}
\end{equation}
The fact that the viscosity depends on the density is in fact physically relevant. Moreover, as we may easily include also a nonzero bulk viscosity with similar form of the viscosity (it is only important that both viscosities are bounded from above and below by $C(1+\ro)$), our model differs from the models where the Bresch--Desjardin entropy plays an important role (see \cite{BrDe}); such a model was studied for chemically reacting mixtures in the evolutionary case in \cite{MPZ3} or \cite{XiXi}.

The pressure consists of two parts: the cold pressure and the molecular pressure; the latter (according to the Dalton law) is the sum of partial pressures $p_k$
$$  \pi = \pi_c+\pi_m = \ro^\gamma + \sumk p_k =  \ro^\gamma + \ro\theta \sumk\dfrac{Y_k}{m_k} $$
with $\gamma>1$ being constant. Note that, similarly as above, for the sake of simplicity, all relevant physical constants are set to be equal to $1$. As we restrict to the case $m_k=1$, we have simply
\begin{equation}
  \pi = \ro^\gamma + \ro\theta.  \label{pi}
  \end{equation}
Similarly, the specific internal energy
\begin{equation} e = e_c+e_m = \dfrac{1}{\gamma-1} \ro^{\gamma-1} + \theta \sumk c_{vk}Y_k \label{e}
\end{equation}
and the specific entropy
\begin{equation}
s_k = c_{vk} \log \theta - \log ( {\ro Y_k}   ),\qquad s= \sumk Y_k s_k ,\label{sk}
\end{equation}
where $c_{vk}$ is the specific heat at constant volume which is related to the specific heat at constant pressure by relation $ c_{pk} = c_{vk} + 1.$
Further, we introduce the specific enthalpies
\begin{equation} h_k = c_{pk} \theta,
\label{hk}
 \end{equation}
and the Gibbs function for each constituent
\begin{equation}
 g _k = h_k - \theta s_k , \qquad\qquad g= \sumk Y_k g_k.\label{gk}
 \end{equation}

For the diffusion fluxes $\mathbf{F}_k$ we assume the Fick law
\begin{equation}
 \mathbf{F}_k   = - D(\ro,\theta,\vek{Y}) \nabla Y_k,\qquad\quad 0<\ro D_0\leq D\leq C \ro \label{Fk}
 \end{equation}
with $D$ a continuous function. This is one of the assumptions which are quite simplifying, however, for our approach necessary. Consequently, we have
$$\sumk \mathbf{F}_k = \mathbf{0}.$$
The energy flux $
\mathbf{Q}$ is a sum of two parts
\begin{equation}
\mathbf{Q}=  \mathbf{q} +\sumk h_k \mathbf{F}_k , \label{Q}
\end{equation}
the one described by the well-known Fourier law
  \begin{equation}
\mathbf{q} = - \kappa (\ro,\theta)\nabla \theta, \label{q}
\end{equation}
 and the other representing the exchange of heat due to molecular diffusion by the product of specific enthalpies  $h_k$ and corresponding diffusion fluxes $\mathbf{F}_k$ defined above.


The system is accompanied with the following boundary conditions on $\partial\Omega$, $\en$ denotes the outer normal vector to $\partial\Omega$,
\begin{align}
\mathbf{F}_k \cdot \en = \:& 0,\qquad k=1,\ldots,n \label{BC1}\\
 \mathbf{Q} \cdot \en  = \:&   L(\ro,\theta)(\theta-\Theta),\label{BC2}\\
 \u\cdot\en = \:& 0,\label{BC3}\\
 \en \cdot \field{S}(\ro,\nabla \u) \cdot \boldsymbol{\tau}^k + f \u \cdot \boldsymbol{\tau}^k = \:& 0,\label{BC4}
\end{align}
with $\boldsymbol{\tau}^k$, $k=1,2$, being two linearly independent vectors tangent to $\partial\Omega$, and $\Theta>0$ representing the outer temperature. We assume
\begin{equation}
f\geq 0,\quad \kappa,L \sim M(1+\theta^\alpha),\label{L}
\end{equation}
id est, we assume that the functions  $\frac{\kappa}{M (1+\theta^\alpha)}$ and $\frac{L}{M(1+\theta^\alpha)}$ are bounded away from zero and from above and the functions $\kappa$ and $L$ at least continuous.  Later on, we set $\alpha=3$. Note that the coefficient $M$ is the mean density of the fluid (closely connected to the total mass of the mixture) and will be assumed throughout this paper sufficiently large.  Hence, equivalently to \eqref{L}, we may assume that all quantities behave linearly with respect to the density. Our assumption $f \geq 0$ is connected with the assumption in the main theorem that our domain $\Omega$ must not be axially symmetric. In this case for all functions from $W^{1,p}(\Omega;\R^3)$ satisfying $\u \cdot \mathbf{n}=0$ on the boundary of $\Omega$ the Korn inequality yields
\begin{equation} \label{Korn1}
\|\u\|_{1,p} \leq C_p \|\field{D}(\u)\|_p
\end{equation}
for arbitrary $1<p<\infty$. We may replace the condition on geometric properties of $\Omega$ by assuming that $f>0$ and $f\sim M$. The proof follows similar lines, only in order to estimate the velocity, we need to combine the estimates from the entropy and the total energy balance and the proof becomes slightly more complicated, see also \cite{AMP16} and \cite{AMP18}.

 \begin{remark}
Integrating equation \refx{Yks}, by virtue of the boundary conditions \refx{BC1} and \refx{BC3}, one can observe that any stationary solution to our system has to satisfy sort of compatibility conditions
\begin{equation}
\inte{ \omega_k\bigl(\vec{Y}(x),\theta(x)\bigr)}=0,\quad \forall k=1,\ldots ,n . \label{compat}
\end{equation}
Then a natural question arises, whether this can be achieved by suitable choice of $\vek{Y}$, $\theta$ for any admissible choice of $\omega_k$. Let us show that it is indeed the case. In view of \refx{gk} condition \refx{omega} is reduced to
	\begin{equation}
	\sumk \omega_k(\vec{Y},\theta) \bigl(c_{pk}\theta  - c_{vk}\log\theta + \theta\log (Y_k)\bigr)\leq 0. \label{omeglog}
	\end{equation}
	We are able to construct two subsets $X_m$ and $Z_m$ of the domain of definition of $\omega_k$ such that for $m$ sufficiently large $\omega_k\geq 0$ on $X_m$ and  $\omega_k\leq 0$ on $Z_m.$ Let us define for fixed $\delta\in(0,\frac12)$ and $k\in\{1,\ldots,n\}$
	$$ X_m  := \Bigl\{ (\theta,\vek{Y}) \in  \mathbf{R}^{n+1}_+,\:\theta\in(\delta,\delta^{-1}), \: Y_k\leq \frac{1}{m},\: Y_i \in(\delta,1-\delta)\text{ for }i\neq k\Bigr\}.$$
	As $ \theta\log Y_k \leq  - \delta \log m $ on $X_m$ we have $g_k\to - \infty$ for $m\to + \infty$, as a matter of fact the other summands in \refx{omeglog} are bounded. Therefore, $\omega_k$ has to be non-negative on set $X_m$. Similarly,  on a set defined by
	$$ Z_m = \Bigl \{ (\theta,\vek{Y}) \in  \mathbf{R}^{n+1}_+,\: \theta\in(\delta,\delta^{-1}),\: Y_k = 1-\dfrac{n-1}{m}, Y_i =\dfrac{1}{m} \text{ for } i\neq k  \Bigr\} , $$
	we have
	\begin{align*}
	\sum_{l=1}^n \omega_l g_l = \omega_k g_k +  \sum_{i\neq k} \omega_i \Bigl(c_{pi}\theta  - c_{vi}\log\theta - \theta \log m \Bigr)\leq\: & 0\\
 \omega_k g_k +  \sum_{i\neq k} \omega_i \Bigl(c_{pi}\theta  - c_{vi}\log\theta \Bigr)\leq\: & \theta \log m \sum_{i\neq k} \omega_i .
	\end{align*}
Since the left-hand side of the last inequality is bounded independently of $m$, necessarily $\sum_{i\neq k} \omega_i \geq 0$ and in view of \refx{sumomeg} we have ultimately  $\omega_{k}\leq0$. This shows that $\omega_k$ changes the sign within its domain of definition (or is zero on a nontrivial subset), and thus there always exist suitable $\vek{Y}$ and $\theta$ such that condition \refx{compat} is satisfied.
\end{remark}

The main point of the preceding remark lies in the fact that by our construction, we do not need to consider condition \eqref{compat} in what follows. This condition will appear after the limit passages $\epsilon$ and $\delta \to 0$. Note that the situation here is the same as in \cite {GioPoZa}, \cite{PiaPo} or \cite{PiaPo2}, where, however, this problem was not discussed.

In what follows we use standard notation for the Sobolev and Lebesgue spaces with the abbreviate notation for their norms over $\Omega$, namely
$$  \norm{g}_{p}  := \norm{g}_{L^p(\Omega)} ,\qquad  \norm{g}_{k,p} :=\norm{g}_{W^{k,p}(\Omega)}.$$

Our main assumption, which is in fact crucial in the next considerations, concerns the density. We restrict ourselves to the case of fluids with high density in comparison to other given data. More precisely, we will write
$$\ro = M+r, \qquad \inte{r}=0,\qquad \frac{1}{\abs{\Omega}} \inte{\ro} = M,$$
we define for $p>3$ the following norm of the solution
$$ \Xi =  M^{\gamma-2}\norm{r}_{1,p} + \norm{\u}_{2,p} + \|\theta\|_{1,p} +\|\vek{Y}\|_{1,p}$$
and study the solutions with
\begin{equation}\Xi\ll M. \label{Assume}
\end{equation}
In particular, due to embedding $W^{1,p}(\Omega)\hookrightarrow L^\infty(\Omega)$, we will have $\frac{M}{2}<\ro<M.$ The existence of such solutions will be guaranteed provided
\begin{equation}
M> C_0(\ef,\Theta,\vek{\omega}), \label{choice}
\end{equation}
where the number $C_0$ on the right-hand side depends only on the given data, see \refx{MainIneq}.

Different models of mixtures have been recently studied in many papers. Our approach is close to the model introduced by Giovangigli (see \cite{Gio}), even though we simplify the quite general model, not only by considering the Fick law. The book also contains global-in-time existence proof for his model, however, only in case of small data. Some other (more sophisticated) small data results can be found in \cite{PiaShiZat1} and \cite{PiaShiZat2}.

Global-in-time solutions for large data are much more difficult to obtain. The first result in this direction is by Feireisl, Petzeltov\'{a} and Trivisa \cite{FePeTr}, where also the Fick law was studied. Further results, for more general diffusion matrix, can be found in \cite{Zat2}, \cite{Zat5} and in \cite{MPZ}, \cite{MPZ2}, \cite{MPZ3} or in \cite{XiXi}. In the papers, where also the fluid equations are included, the viscosity is similar to the viscosity studied here. However, here the estimates coming from Bresch--Desjardin entropy play an important role and therefore the precise form of viscosity which is unfortunately not very relevant from the point of view of physics must be used. The incompressible limit of these equations were studied in \cite{FePe} or in \cite{KwTr}.

In this paper we deal with steady solutions. The first studies of models based on the approach from \cite{Gio} are \cite{Zat5}. These results were extended in \cite{GioPoZa}, \cite{PiaPo}, \cite{PiaPo2} and in \cite{GuoX}. Note also that our approach is also based on the method of decomposition, developed and successfully used in nineties to study strong solutions of the compressible Navier--Stokes equations by Novotn\'{y} and Padula, see \cite{NoPa}. This fact will be hidden here as we can directly use results from \cite{AMP16} and \cite{AMP18} which are based on this technique and we do not have to prove them here again.

Last but not least, note that there are many different approaches to models of mixtures, see e.g. \cite{Bothe}, \cite{BuHa} or \cite{RaTao}. The models, where the velocities are studied separately for each constituent are studied in many papers by Mamontov and Prokudin, see e.g. \cite{MaPr1} or \cite{MaPr2}.

In what follows, we first show a priori estimates for our problem. They indicate the form of the main result which will be presented at the end of the following section. The last section will be devoted to the construction of a solution to our problem, i.e. to the proof of our main result.

\vspace*{0.7cm}


\section{A priori estimates}


This section contains the a priori estimates for our system. Note that the following lemma only indicates in which spaces we should look for solutions; it is even not a rigorous proof of a priori estimates since we use during the computations the assumption that the "density perturbation" $r \ll M$ which does not follow from the assumptions and must be verified in the following section, where the solution is constructed.

\begin{lem}
  Suppose that the quadruple $(\u,r,\theta,\vek{Y})$ is a smooth solution to system \refx{CEs}--\refx{Yks} satisfying boundary conditions \refx{BC1}--\refx{BC4}. Assume further that the solution lies in the class \refx{Assume}. Then we have
  \begin{align}
     \norm{\u}_{1,2}  + \norm{\theta}_{9}+\norm{\theta}_{1,2} + \|\vek{Y}\|_{1,2}  \leq &\: E(\ef,\Theta), \label{Apriori1}\\
   \norm{\u}_{2,p} +  M^{\gamma-2}\norm{r}_{1,p} + \norm{\theta}_{1,p} +\|\vek{Y}\|_{1,p} \leq &\: C(\ef,\Theta,\omega_k),\label{Apriori2}
   \end{align}
   with the right-hand sides independent of the solution and $M$.
\end{lem}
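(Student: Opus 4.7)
The plan is to prove \refx{Apriori1} via an entropy-plus-total-energy computation and then bootstrap to the $L^p$-regularity in \refx{Apriori2} by inverting linearized operators whose leading coefficients are essentially proportional to $M$; the second step closely follows the decomposition strategy of \cite{AMP16,AMP18} adapted to the multi-species setting.

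For \refx{Apriori1}, I would first derive an entropy balance by multiplying \refx{Yks} by $g_k/\theta$, adding $1/\theta$ times \refx{EEs}, and using \refx{CEs} to cancel transport terms. The resulting identity has the schematic form
$$\Div(\ro s \u) + \Div\Bigl(\frac{\mathbf{q}}{\theta} - \frac{1}{\theta}\sumk g_k \mathbf{F}_k\Bigr) = \sigma,$$
with entropy production $\sigma$ grouping $\field{S}:\field{D}(\u)/\theta$, $\kappa|\nabla\theta|^2/\theta^2$, a non-negative Fick-flux contribution, and the chemical term $-\sumk \omega_k g_k/\theta\geq 0$ by \refx{omega}. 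Integrating over $\Omega$ and using \refx{BC1}--\refx{BC4}, one bounds $\inte{\sigma}$ by a boundary term in $L(\theta-\Theta)$, which is in turn controlled via the integrated total energy identity $\inth{L(\theta-\Theta)} = \inte{\ro\ef\cdot\u}$. Since $\kappa\sim M(1+\theta^3)$, the dissipation dominates $M\int|\nabla\theta^{3/2}|^2$, and combined with the boundary contribution from $L\sim M(1+\theta^3)$ one obtains $\theta^{3/2}\in W^{1,2}\hookrightarrow L^6$, hence $\theta\in L^9\cap W^{1,2}$. Korn's inequality \refx{Korn1} upgrades the $\field{D}(\u)$-dissipation to $\|\u\|_{1,2}$. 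The mass-fraction bound is direct: multiply \refx{Yks} by $Y_k$, integrate, observe that the convective term vanishes by \refx{CEs}, and use $D\geq \ro D_0 \gtrsim M$ together with bounded $\omega_k$ to get $\|\nabla Y_k\|_2\leq C$ independently of $M$.

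For \refx{Apriori2}, the strategy is a closed-loop bootstrap in which the smallness $\Xi\ll M$ ensures that in every equation the leading operator (of size $M$) dominates the nonlinearities. Apply the standard $L^p$ elliptic theory to $-\Div(D\nabla Y_k)$ and $-\Div(\kappa\nabla\theta)$ with their Robin/Neumann data: after factoring out $M$, the principal parts are uniformly elliptic with $L^\infty$ coefficients (using $W^{1,p}\hookrightarrow L^\infty$ for $p>3$), and the source terms are at most $O(M)$, yielding $\|Y_k\|_{1,p}$ and $\|\theta\|_{1,p}$ of order $O(1)$. For the momentum equation, rewrite $\field{S}=2\ro\field{D}(\u)=2M\field{D}(\u)+2r\field{D}(\u)$ so that the leading Lam\'e operator $-2M\Div\field{D}(\u)$ is handled by the $L^p$ Stokes theory under \refx{BC4}; a Helmholtz decomposition of $\nabla\pi$ then identifies its potential part with $r$ through $\nabla(\ro^\gamma+\ro\theta)\approx\gamma M^{\gamma-1}\nabla r$ at leading order, producing the weight $M^{\gamma-2}$ in front of $\|r\|_{1,p}$. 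The compressible part of $\u$ is recovered from \refx{CEs} in the form $\ro\Div\u = -\u\cdot\nabla r$, closing the coupling between $\u$ and $r$.

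The main obstacle will be verifying that this bootstrap actually closes. Each nonlinear term --- $\Div(\ro\u\otimes\u)$ in the momentum equation, $\Div(\ro e\u)$ in the energy equation, $\Div(\ro Y_k\u)$ in the species equations --- contributes to the right-hand side a factor of the form $M^a\Xi^b$ with $a,b\geq 1$, and after reinjection the total coefficient multiplying $\Xi$ must be strictly less than $1$. This is exactly why $M$ has to dominate the data-dependent constants inherited from \refx{Apriori1}, which is the content of \refx{choice}, and it is also why the weight $M^{\gamma-2}$ on $\|r\|_{1,p}$ is indispensable: without it, the pressure contribution $\nabla r$ would fail to be subordinate to the leading Lam\'e operator, and the fixed-point loop would not absorb. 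A minor technical point is to propagate the pointwise positivity of $\theta$ needed to make $\log\theta$ meaningful in $\sigma$; this follows from $\rho\sim M$, the Robin condition \refx{BC2}, and a minimum principle for the temperature equation.
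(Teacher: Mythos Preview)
Your entropy-plus-total-energy derivation of \refx{Apriori1} matches the paper's argument essentially line by line, including the separate test of \refx{Yks} by $Y_k$ to control $\|\nabla Y_k\|_2$.

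For \refx{Apriori2}, however, the closure mechanism you describe has a gap. You claim the nonlinear terms contribute factors of the form $M^a\Xi^b$ with $a,b\geq 1$ and can then be absorbed because $\Xi\ll M$. But take for instance $\field{S}:\nabla\u$ in \refx{EEs}: in $L^{3p/(3+p)}$ this is bounded by $M\norm{\nabla\u}_{6p/(3+p)}^2\leq CM\Xi^2$, and after dividing out the $O(M)$ coefficient of the elliptic operator you are left with $\Xi^2$ on the right-hand side. Chasing this through the momentum and species equations yields at best $\Xi\leq C+C'\Xi^2$, and since the hypothesis $\Xi\ll M$ carries no absolute smallness of $\Xi$, this inequality does not imply $\Xi\leq C(\ef,\Theta,\vec\omega)$ independent of $M$.

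The paper closes the loop differently. The assumption $\norm{r}_\infty\ll M$ is used \emph{only} to guarantee that the coefficients of the elliptic operators are perturbations of constants, so that the $W^{1,p}$ regularity theory applies; it is not used to absorb nonlinearities. Instead, each nonlinear contribution is interpolated between the $L^2$-level bounds already obtained in \refx{Apriori1} and the $L^p$-level quantity, e.g.\ $\norm{\nabla\u}_{6p/(3+p)}^2\leq C\norm{\nabla\u}_2^{(6p-6)/(5p-6)}\norm{\u}_{2,p}^{(4p-6)/(5p-6)}$ with the exponent on $\norm{\u}_{2,p}$ strictly below $1$, and similarly for the $\theta$- and $Y_k$-terms. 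After Young's inequality one arrives at an estimate of the form $\norm{\theta}_{1,p}\leq C\bigl(1+\norm{\theta}_{1,p}^{\beta}\bigr)$ with an explicit $\beta<1$ (the paper verifies several such exponent inequalities case by case), which bounds $\norm{\theta}_{1,p}$ outright. The paper also exploits the pointwise constraint $0\leq Y_k\leq 1$ to write $\norm{\ro\u Y_k}_p\leq M\norm{\u}_p$ without acquiring a further factor of $\Xi$; your sketch does not invoke this.
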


\begin{proof}
First, we multiply equation \refx{EEs} by $\dfrac{1}{\theta}$; it yields
\begin{equation*}
{\dfrac{\field S :\field D(\u)}{\theta}}- {\dfrac{\Div \mathbf{Q}}{\theta} }=  {\dfrac{\pi(\ro,\theta)\Div\u}{\theta}}+{\dfrac{\Div(\ro e \u)}{\theta}},
\end{equation*}
hence by constitutive assumptions \refx{S}, \refx{pi}, \refx{e} and \refx{Q} we have
\begin{equation}
\dfrac{2\ro\abs{\field D(\u)}^2}{\theta} - \dfrac{ \mathbf{q}\cdot \nabla \theta }{\theta^2} - {\Div\Bigl(\dfrac{ \mathbf{Q}}{\theta}\Bigr)}= {\dfrac{ \sumk h_k \mathbf{F}_k \cdot \nabla\theta }{\theta^2}} +{\dfrac{ \ro^\gamma \Div\u}{\theta} } +{ \ro \Div\u }+ {\dfrac{\ro  \u \cdot \nabla (e_c+e_m)}{\theta}}. \label{entr}
\end{equation}
Further, note that
\begin{multline*}
{\dfrac{ \pi_c\Div\u}{\theta} } +{\dfrac{\ro  \u \cdot \nabla e_c}{\theta}} = {\dfrac{ 1}{\theta}\biggl(\ro^\gamma \Div\u  + \ro  \u \cdot \nabla\bigl( \dfrac{\ro^{\gamma-1}}{\gamma-1}\bigr)\biggr)}  ={\dfrac{ 1}{\theta}\biggl(\ro^{\gamma-1} \ro\Div\u  + \ro^{\gamma-1} \u  \cdot \nabla\ro \biggr)} \\ ={\dfrac{ \ro^{\gamma-1}}{\theta}\Div( \ro\u)} =0
\end{multline*}
and
\begin{equation}
{\dfrac{ \pi_m\Div\u}{\theta} } + {\dfrac{\ro  \u \cdot \nabla e_m}{\theta}} = {\ro \Div\u  +\ro \sumk c_{vk}Y_k  \u \cdot \dfrac{\nabla \theta}{\theta}+\ro  \u \cdot  \sumk c_{vk}\nabla Y_k }.\end{equation}
Next, by multiplying equation \refx{Yks} by $\dfrac{g_k}{\theta}$, we obtain due to \refx{hk}--\refx{gk}
\begin{equation}
{\Div(\ro Y_k \u)\dfrac{h_k-\theta s_k}{\theta} }+ {\Div \mathbf{F}_k \dfrac{g_k}{\theta} } = {\dfrac{\ro \omega_k g_k}{\theta}}
\end{equation}
which reads
\begin{equation}
{\ro  \u \cdot c_{pk} \nabla Y_k  } - {\Div(\ro Y_k \u) s_k} + {\Div \Bigl(\mathbf{F}_k \dfrac{g_k}{\theta}\Bigr) }  -{\mathbf{F}_k\cdot\nabla \Bigl( \dfrac{g_k}{\theta}\Bigr) }   = {\ro \omega_k \dfrac{g_k}{\theta}}.\label{Fkgk}
\end{equation}
Moreover, as $c_{pk} = c_{vk}+1$, we have
\begin{equation*}
\sumk{\ro  \u \cdot c_{pk} \nabla Y_k  } = \sumk {\ro  \u \cdot c_{vk} \nabla Y_k  } +  {\ro  \u \cdot  \nabla 1  }=
\sumk {\ro  \u \cdot c_{vk} \nabla Y_k  },
\end{equation*}
and
$$ \sumk{\Div(\ro  \u  Y_k)s_k} = {\Div(\ro s  \u ) }  - {\ro  \u  \sumk Y_k \cdot\nabla s_k} . $$
The last term can be expanded by relation \refx{sk} as follows
\begin{equation*}
\begin{split}
{\ro  \u  \sumk Y_k \cdot\nabla s_k} &=
{\ro  \u  \sumk Y_k \cdot\nabla ( c_{vk}\log\theta - \log \ro - \log{Y_k})} \\
&=   {\ro  \u  \sumk Y_k \cdot\Bigl( c_{vk}\dfrac{\nabla\theta}{\theta} - \dfrac{\nabla\ro}{\ro} - \dfrac{\nabla Y_k}{Y_k}\Bigr)}\\
&=   {     \dfrac{ \sumk  c_{vk} Y_k \ro  \u\cdot\nabla\theta}{\theta} -   \u  \cdot  \nabla\ro },
\end{split}
\end{equation*}
where we have used that $\sumk Y_k=1$. Similarly, as $\sumk\mathbf{F}_k=\mathbf{0}$, we have
\begin{multline*}
-\sumk{\mathbf{F}_k\cdot\nabla \Bigl( \dfrac{g_k}{\theta}\Bigr) } = -\sumk {\mathbf{F}_k\cdot\nabla \Bigl( \dfrac{h_k-\theta s_k}{\theta}\Bigr) } = \sumk {\mathbf{F}_k\cdot\nabla  s_k } =  \sumk {\mathbf{F}_k\cdot \Bigl(  c_{vk} \dfrac{\nabla \theta}{\theta} - \dfrac{\nabla \ro}{\ro} - \dfrac{\nabla Y_k}{Y_k}  \Bigr)}\\
 =   {\sumk \dfrac{\mathbf{F}_k\cdot   c_{vk} \nabla \theta}{\theta}  } - {\sumk \dfrac{\mathbf{F}_k  \cdot \nabla Y_k}{Y_k}  } .
\end{multline*}
Thus, integrating \refx{Fkgk} over $\Omega$ yields
\begin{multline*}
 \inte{\Div\Bigl(\sumk \mathbf{F}_k \dfrac{g_k}{\theta} - \ro s  \u  \Bigr) } + \inte{\sumk \dfrac{\mathbf{F}_k\cdot   c_{vk} \nabla \theta}{\theta}  } - \inte{\sumk \dfrac{\mathbf{F}_k  \cdot \nabla Y_k}{Y_k}  }  \\
 =  \inte {\u  \cdot  \nabla\ro } - \inte{\sumk \ro  \u \cdot c_{vk} \nabla Y_k  }- \inte{     \dfrac{ \sumk  c_{vk} Y_k \ro  \u\cdot\nabla\theta}{\theta}}   +\inte{\sumk\ro \omega_k \dfrac{g_k}{\theta}},
\end{multline*}
while the integrated version of \refx{entr} reads
\begin{multline*}
\inte{\biggl(\dfrac{2\ro\abs{\field D(\u)}^2}{\theta} - \dfrac{ \mathbf{q}\cdot \nabla \theta  }{\theta^2}\biggr)} - \inte{\Div\Bigl(\dfrac{ \mathbf{Q}}{\theta}\Bigr)}\\= \inte{\dfrac{ \sumk h_k \mathbf{F}_k \cdot \nabla\theta }{\theta^2}}  +\inte{ \ro \Div\u }+ \inte{\dfrac{\ro  \u \cdot \nabla \bigl(\theta \sumk c_{vk}Y_k\bigr)}{\theta}}.
\end{multline*}
We conclude (using the Fourier law \refx{q})
\begin{multline*}
\inte{\biggl(\dfrac{2\ro\abs{\field D(\u)}^2}{\theta} + \dfrac{\kappa(\ro,\theta)\abs{\nabla \theta}^2}{\theta^2}\biggr)}  - \inte{\sumk \dfrac{\mathbf{F}_{k}  \cdot \nabla Y_k}{Y_k}  } +\inte{\Div\Bigl(-\sumk \mathbf{F}_{k}{s_k} -   \ro s  \u + \dfrac{\kappa \nabla \theta}{\theta}  \Bigr)}\\  = \inte{\sumk\ro \omega_k \dfrac{g_k}{\theta}}.
\end{multline*}
Alternatively, we in fact got the entropy balance
\begin{equation*}
\inte{\Div\Bigl( \ro s  \u +\dfrac{ \mathbf{q}}{\theta} +  \sumk s_k \mathbf{F}_k \Bigr)}  = \inte{ \sigma}
\end{equation*}
with $\sigma$ the entropy production rate
$$\sigma = \dfrac{2\ro\abs{\field D(\u)}^2}{\theta} + \dfrac{\kappa (\ro,\theta)\abs{\nabla \theta}^2}{\theta^2}-\sumk \dfrac{\mathbf{F}_k  \cdot \nabla Y_k}{Y_k} -\sumk\ro \omega_k \dfrac{g_k}{\theta} . $$
Thus, by virtue of integrating by parts on the left-hand side we have, due to boundary conditions \refx{BC1}--\refx{BC3},
\begin{multline*}
 \inte{ \biggl(\dfrac{2\ro\abs{\field D(\u)}^2}{\theta} + \dfrac{\kappa (\ro,\theta)\abs{\nabla \theta}^2}{\theta^2}-\sumk \dfrac{\mathbf{F}_k  \cdot \nabla Y_k}{Y_k} -\sumk\ro \omega_k \dfrac{g_k}{\theta}\biggl)}  + \inth {L(\ro,\theta) \dfrac{\Theta}{\theta} } \\ = \inth {L(\ro,\theta) },
\end{multline*}
and due to \refx{omega}, \refx{Fk} and \refx{L}
\begin{equation}
 \inte{ \biggl( \dfrac{\abs{\field D(\u)}^2}{\theta} + \abs{\nabla \log\theta}^2+\theta^{\alpha-2}\abs{\nabla \theta}^2+\sumk  \dfrac{  \abs{\nabla Y_k}^2}{Y_k} \biggr) } + \inth {\Bigl(\dfrac{1}{\theta}+\theta^{\alpha-1} \Bigr)}  \leq C \inth {\theta^\alpha }. \label{entrop}
\end{equation}
Moreover, $  \theta^{\alpha-2}\abs{\nabla \theta}^2 = \abs{ \theta^{\frac{\alpha}2-1}\nabla \theta }^2 = \abs{ \frac{2}{\alpha} \nabla( \theta^{\frac{\alpha}{2}})}^2$, hence
$$ \norm{\theta}_{3\alpha}^\alpha = \norm{\theta^{\alpha/2}}_{6}^2 \leq C \norm{\theta^{\alpha/2}}_{1,2}^2 \leq C\biggl(\norm{\nabla(\theta^{\alpha/2})}_{2}^2  +  \inth{\theta^\alpha} \biggr)  \leq C \norm{\theta}_{L^{\alpha}(\partial\Omega)}^\alpha, $$
and (see \eqref{Korn1})
$$ \norm{\u}_{1,\frac{6\alpha}{3\alpha+1}} \leq C\|\field{D}(\u)\|_{\frac{6\alpha}{3\alpha+1}}\leq C \norm{\dfrac{\field{D}(\u)}{\sqrt{\theta}}}_{2} \norm{\sqrt{\theta}}_{6\alpha}
\leq \norm{\dfrac{\field{D}(\u)}{\sqrt{\theta}}}_{2} \norm{\theta}_{3\alpha}^{1/2} \leq  C \norm{\theta}_{L^{\alpha}(\partial\Omega)}^{\frac{\alpha}{2}+\frac12} .$$
Furthermore, we have the total energy balance
$$ \inth{ \bigl( L(\ro,\theta)(\theta-\Theta) + f \abs{\u}^2 \bigr) }  = \inte{ \ro\mathbf{f}\cdot\u}  ,$$
hence
\begin{equation*}
 \inth{  \theta^{\alpha+1} }   \leq C \Bigl(\inte{ \mathbf{f}\cdot\u}  + \inth{\theta^{\alpha}\Theta} \Bigr) ,
 \end{equation*}
  which yields
\begin{equation}
\inth{\theta^{\alpha+1} }   \leq C \big( \norm{\mathbf{f}}_{\frac{6\alpha}{5\alpha-1}} \norm{\u}_{\frac{6\alpha}{3\alpha+1}} + \norm{\theta}_{L^{\alpha+1} (\partial \Omega)}^\alpha \|\Theta\|_{L^{\alpha+1}(\partial\Omega)} \big) . \label{total}
\end{equation}
Therefore, combining inequalities \refx{entrop} and \refx{total} we conclude
\begin{equation} \label{veloc}
\norm{\u}_{\frac{6\alpha}{\alpha+1}}^2 \leq C \norm{\u}_{1,\frac{6\alpha}{3\alpha+1}}^2 \leq C \big( \norm{\mathbf{f}}_{\frac{6\alpha}{5\alpha-1}}^2  +  \|\Theta\|_{L^{\alpha+1}(\partial\Omega)}^{\alpha+1} \big).
\end{equation}
Further, from \eqref{total} we also have
$$
\inth{\theta^{\alpha+1} } \leq C \big( \norm{\mathbf{f}}_{\frac{6\alpha}{5\alpha-1}}^2  +  \|\Theta\|_{L^{\alpha+1}(\partial\Omega)}^{\alpha+1} \big)
$$
which yields
\begin{equation} \label{tempera}
\norm{\theta}_{3\alpha}^\alpha + \|\nabla (\theta^{\frac \alpha 2})\|_2^2 \leq C\big( \norm{\mathbf{f}}_{\frac{6\alpha}{5\alpha-1}}^2  +  \|\Theta\|_{L^{\alpha+1}(\partial\Omega)}^{\alpha+1} \big)^\frac{\alpha}{\alpha+1}.
\end{equation}
In particular, taking $\alpha=3$
\begin{equation} \label{velo2}
\norm{\u}_{9/2}^2 \leq C  \big(\norm{ \mathbf{f}}_{9/7}^2  + \norm{\Theta}_{L^{4}(\partial\Omega)}^{4} \big)
\end{equation}
as well as
\begin{equation} \label{tempera2}
\|\theta\|_9^3 + \|\nabla \theta^{\frac 32}\|_2^2 \leq C \big(\norm{ \mathbf{f}}_{9/7}^{\frac 32}  + \norm{\Theta}_{L^{4}(\partial\Omega)}^{3} \big).
\end{equation}

Besides, we can go back to the momentum equation, and test it by the velocity field to get
$$ M  \norm{\nabla\u}_2^2+M\|\u\|_{L^2(\partial \Omega)}^2 \leq C \inte{ \bigl(\nabla(\ro\theta)\cdot\u + \ro\ef\cdot\u \bigr)}\leq C \norm{\ro}_\infty \bigl( \norm{\theta}_2\norm{\Div\u}_2+\norm{\ef}_{6/5}\norm{\u}_6\bigr) , $$
\begin{equation}
  \norm{\u}_{1,2}^2 \leq C (\Theta,\ef).
\end{equation}
Therefore, there exists a constant $E$ dependent only on given data, such that (recall that $0\leq Y_k\leq 1$)
\begin{equation}
  \norm{\u}_{1,2}  + \norm{\theta}_{9}+\norm{\theta}_{1,2} + \|\vek{Y}\|_{1,2}   \leq E
\end{equation}
and the first part of the estimates is proved.


In order to improve the regularity we use the method of decomposition in the same way as for the case of single-component fluid in \cite{AMP16} and \cite{AMP18}. Here, we use the fact that the momentum equation is influenced by the chemical reactions only through the temperature involved in the pressure. Otherwise, all the other terms in the momentum equation are identical to the single-component case, and thus can be treated exactly in the same way. This yields (cf. \cite[Lemma 2.1, p. 268]{AMP18}),
\begin{equation} \label{appest1}
 \norm{\u}_{2,p} + M^{\gamma-2}\norm{r}_{1,p} \leq C\bigl(\norm{\ef}_p + \norm{\nabla \theta}_p\bigr).
 \end{equation}
Let us now estimate the first gradient of the temperature. The equation for the internal energy can be rewritten in the form
$$-\Div\bigl( \kappa(\ro,\theta) \nabla \theta\bigr) = \field{S}:\nabla\u - \pi_m \Div\u - \ro \u \cdot\nabla e_m - \Div\Bigl(\sumk\theta c_{pk} \mathbf{F}_k  \Bigr)  $$
and by regularity of this elliptic equation, see \cite{DM95}, we get
\begin{equation*}
\begin{split}
M\norm{\theta}_{1,p} &\:\leq C \Bigl( M\norm{ \theta- \Theta}_{W^{-1/p,p}(\partial\Omega)} +  \norm{\field{S}:\nabla\u}_{\frac{3p}{3+p}} + \norm{ \pi_m \Div\u+\ro\u \cdot\nabla e_m}_{\frac{3p}{3+p}} \Bigr. \\
&\qquad\qquad\qquad\qquad\qquad\qquad\qquad\qquad\qquad\qquad\qquad\qquad\qquad \Bigl. +  \Bigl\|\sumk\theta c_{pk} \mathbf{F}_k\Bigr\|_p  \Bigr) \\
\intertext{which yields}
\norm{\theta}_{1,p} &\:\leq C \biggl( \norm{ \theta- \Theta}_{W^{-1/p,p}(\partial\Omega)} +  \norm{\nabla\u}_{\frac{6p}{3+p}}^2 + \norm{ \theta \Div\u}_{\frac{3p}{3+p}} +\norm{ \u \cdot\nabla \theta}_{\frac{3p}{3+p}} \\ &\qquad\qquad\qquad\qquad\qquad\qquad\qquad\qquad\qquad\qquad +\sumk \Bigl (\norm{ \u  \theta\cdot\nabla Y_k }_{\frac{3p}{3+p}}+ \frac1M\norm{ \theta   \mathbf{F}_k}_p   \Bigr) \biggr).
\end{split}
\end{equation*}
Note that to prove the estimates, we write $\Div\bigl( \kappa(\ro,\theta) \nabla \theta\bigr) = \Div\bigl( M(1+\theta^3) \nabla \theta\bigr) + \Div\bigl( r (1+\theta^3) \nabla \theta\bigr)$ and use \cite[Lemma A.2 and Lemma A.3]{AMP18} together with the assumption that $\|r\|_\infty \ll M$ (this assumption has already been used once, in order to obtain \eqref{appest1}).

Let us estimate the terms on the right-hand side of the preceding inequality. We need to carefully interpolate between estimates coming from the entropy balance and the higher order norms. Using the H{\"o}lder and the Gagliardo--Nirenberg interpolation inequalities we obtain
\begin{equation}
\norm{\nabla\u}_{\frac{6p}{3+p}}^2 \leq C \norm{\nabla\u}_{2}^{\frac{6p-6}{5p-6}} \norm{\u}_{2,p}^{\frac{4p-6}{5p-6}} \leq C \norm{\u}_{2,p}^{\frac{4p-6}{5p-6}},
 \end{equation}
\begin{equation}
\norm{\theta \Div \u}_{\frac{3p}{3+p}} \leq C \norm{\theta}_{9} \norm{ \nabla \u}_{\frac{9p}{9+2p}} \leq C \norm{\theta}_{9} \norm{ \nabla \u}_{2}^{1-t} \norm{  \u}_{2,p}^{t},
\end{equation}
\begin{equation}
\norm{\u \nabla \theta}_{\frac{3p}{3+p}} \leq C \norm{\u}_{6} \norm{ \nabla \theta}_{\frac{6p}{6+p}} \leq C \norm{\u}_{1,2}  \norm{ \nabla \theta }_{2}^{1-t'} \norm{ \nabla\theta}_{p}^{t'}
\end{equation}
with $t<1$ and $t'<1$. As also $\frac{4p-6}{5p-6}<1$, these three terms can be put directly to the left-hand side  by means of the Young inequality.
Further, we will use
$$\norm{\u}_\infty\leq C \norm{\u}_6^{1-r}\norm{\u}_{2,p}^r  \qquad\qquad  \norm{\theta}_\infty\leq C \norm{\theta}_9^{1-s}\norm{\theta}_{1,p}^s ,$$
where $r = \dfrac{p}{5p-6}$ and $s = \dfrac{p}{4p-9}.$ Thus,
\begin{equation}
\norm{ \u  \theta\cdot\nabla Y_k }_{\frac{3p}{3+p}} \leq \norm{ \u}_\infty\norm{\theta}_{9}\norm{\nabla Y_k }_{\frac{9p}{9+2p}} \leq C \norm{\u}_{2,p}^r  \norm{\nabla Y_k }_{\frac{9p}{9+2p}}.
\end{equation}
For $3<p\leq\frac{18}{5}$ we have $\frac{9p}{9+2p} \leq 2$ and therefore, we can put the term to the left-hand side. Otherwise, we interpolate
$$\norm{\nabla Y_k }_{\frac{9p}{9+2p}}  \leq C\norm{\nabla Y_k }_{2}^{1-\varsigma} \norm{\nabla Y_k }_{p}^\varsigma$$
with $\varsigma = \dfrac{5p-18}{9p-18}.$
Similarly,
$$\norm{ \theta   \mathbf{F}_k}_p  \leq \norm{\theta}_\infty \norm{   \mathbf{F}_k}_{p} \leq C   \norm{\theta}_{1,p}^s \norm{   \mathbf{F}_k}_p . $$
Summing up the previous estimates yields
\begin{equation}
\norm{\theta}_{1,p} \leq C(\ef,\Theta) \Bigl(1+ \norm{\nabla Y_k }_{p}^{\frac{5p-6}{4p-6}\frac{ 5p-18 } { 9p-18} }   + \frac1M \norm{   \mathbf{F}_k}_p^{   \frac{4p-9}{3p-9}} \Bigr).
\end{equation}
Further, note that
$$ M\norm{\nabla Y_k}_p  \leq C \norm{\mathbf{F}_k}_p$$ and
$$  \frac{5p-6}{4p-6}\frac{ 5p-18 } { 9p-18} < \frac{4p-9}{3p-9} \text{ for any }p>\frac{18}{5},$$ whence
\begin{equation}
\norm{\theta}_{1,p} \leq C(\ef,\Theta) \Bigl(1  + \frac1M \norm{   \mathbf{F}_k}_p^{   \frac{4p-9}{3p-9}} \Bigr). \label{thetaFk}
\end{equation}
Therefore, it remains to bound $\mathbf{F}_k$.  Equation \refx{Yks} reads
$$ \Div\mathbf{F}_k = \ro  \omega_k -\Div(\ro  \u \cdot Y_k),$$
and we can estimate (we use again that $\|r\|_\infty \ll M$; note also that we work with solutions such that $0\leq Y_k\leq 1$, $k=1,2,\dots,n$)
\begin{equation}
 \norm{\nabla Y_k}_{p}\leq C\bigl( \norm{ \omega_k}_{p} + \norm{\u}_{p} \bigr)\leq
C\bigl( 1+ \norm{\u}_{2,p}^q \bigr)
\leq
C\bigl( 1+ \norm{\theta}_{1,p}^q \bigr)
\end{equation}
for $p>6$, while
\begin{equation}
 \norm{\nabla Y_k}_{p}\leq C
\end{equation}
for $3<p\leq 6$,
where we used
$$ \norm{\u}_{p} \leq \norm{\u}_{6}^{1-q} \norm{\u}_{2,p}^q  \text{ with } q = \dfrac{p-6}{5p-6}$$
for $p>6$. Hence, by virtue of the Young inequality we obtain
\begin{equation}
 \frac{1}{M} \norm{\mathbf{F}_k}_p\leq C  \norm{\nabla Y_k}_{p} \leq C \bigl( 1+ \norm{\theta}_{1,p}^{\frac{p-6}{5p-6}} \bigr)
\end{equation}
if $p>6$, while
\begin{equation}
 \frac{1}{M} \norm{\mathbf{F}_k}_p\leq C
\end{equation}
if $3<p\leq 6$.
Together with inequality \refx{thetaFk} this yields
\begin{equation}
\|\theta\|_{1,p} \leq C
\end{equation}
if $3<p\leq 6$, while
\begin{equation}
\norm{\theta}_{1,p} \leq C(\ef,\Theta,\omega) \Bigl(1  +  \norm{  \theta}_{1,p}^{   \frac{4p-9}{3p-9}\frac{p-6}{5p-6}} \Bigr)
\end{equation}
for $p>6$.
However,
$$ \frac{4p-9}{3p-9}\frac{p-6}{5p-6}  <1\text{ for all }p>6.$$
Therefore, we can conclude
\begin{equation}
\norm{\u}_{2,p} + M^{\gamma-2}\norm{r}_{1,p} +\norm{\theta}_{1,p}+\|\vek{Y}\|_{1,p} \leq C(\ef,\Theta,\vek{\omega}). \label{MainIneq}
\end{equation}
\end{proof}

The last estimate of the solutions determines the class within which we search for the solution, thus we are ready to formulate our main result.


\begin{thm}\label{main}
	Let $\gamma >1$.
	Let $\Omega\subset \field{R}^3$ be a $C^2$ bounded domain which is not axially symmetric. Let $p>3$,
	$\ef\in L^{p}(\Omega)$, $\Theta_0\in W^{1-1/p,p}(\partial\Omega)$, $\Theta_0\geq T_0>0$ a.e. on $\partial \Omega$, $\vec \omega \in L^{\infty}(\Omega;\R^n)$. Then there exists $M_0$ sufficiently
	large with respect to  $\norm{\ef}_p$, $\norm{\vek{\omega}}_{\infty}$ $\norm{\Theta}_{W^{1-1/p,p}(\partial\Omega)}$  in the sense of condition
	\refx{choice} such that for any $M\geq M_0$, where $M=\frac{1}{\Omega} \int_{\Omega} \ro\, {\rm d}x$
	is the average of the  density, there exists a strong stationary solution to the
 system \refx{CEs}--\refx{BC4}. Moreover, it possesses the regularity
	$(\ro,\u,\theta, \vek{Y})\in W^{1,p}(\Omega)\times W^{2,p}(\Omega;\field{R}^3)\times W^{2,p}(\Omega)\times W^{2,p}(\Omega;\field{R}^n).$
\end{thm}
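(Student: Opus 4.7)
The a priori bound \eqref{MainIneq} already identifies a constant $C_0=C_0(\ef,\Theta,\vec\omega)$ such that any smooth solution belonging to the class \eqref{Assume} automatically satisfies $\Xi\le C_0$. The plan is to reverse this reasoning and impose \eqref{choice} in the form $M\ge M_0 := 2C_0$, so that the set
\begin{equation*}
\mathcal{B} := \Bigl\{(\u,r,\theta,\vek Y)\in W^{2,p}\times W^{1,p}\times W^{1,p}\times W^{1,p} : \Xi\le 2C_0,\; 0\le Y_k,\; \sumk Y_k=1,\; \theta\ge T_0\Bigr\}
\end{equation*}
is a closed convex subset of, say, $W^{1,p}\times L^\infty\times L^\infty\times L^\infty$; on it I would build a compact self-map whose fixed point is the desired strong solution. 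Throughout the iteration the embedding $W^{1,p}\hookrightarrow L^\infty$ ($p>3$) guarantees the pointwise bound $M/2<\ro<3M/2$, which is the qualitative property used implicitly in the Lemma.

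Given $(\bar\u,\bar r,\bar\theta,\bar{\vek Y})\in \mathcal B$, I would define its image $(\u,r,\theta,\vek Y)$ in three decoupled steps. First, solve the linearised continuity--momentum block with pressure forcing $\nabla(\ro\bar\theta)$ coming from the mixture part of \eqref{pi}; this is exactly the setting of \cite[Lemma~2.1]{AMP18} based on the method of decomposition and yields $(\u,r)\in W^{2,p}\times W^{1,p}$ satisfying \eqref{appest1}, since the temperature appears only through this term and every other term matches the single-component case. Second, solve the $n$ linear elliptic problems
\begin{equation*}
-\Div\bigl(D(\ro,\bar\theta,\bar{\vek Y})\nabla Y_k\bigr)+\Div(\ro Y_k\u) = \ro\,\omega_k(\bar{\vek Y},\bar\theta),\qquad \nabla Y_k\cdot\en=0,
\end{equation*}
obtaining $Y_k\in W^{2,p}$; non-negativity follows from the weak maximum principle after an auxiliary truncation of $\omega_k$ so that $\omega_k\ge0$ whenever $Y_k\le 0$, and the constraint $\sumk Y_k=1$ is preserved because $\sumk\omega_k=0$, $\sumk\mathbf F_k=\mathbf 0$ and $\Div(\ro\u)=0$. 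Third, solve the quasilinear elliptic temperature equation
\begin{equation*}
-\Div\bigl(\kappa(\ro,\theta)\nabla\theta\bigr)= \field S:\field D(\u)-\pi_m\Div\u-\ro\u\cdot\nabla e_m-\Div\Bigl(\sumk\theta c_{pk}\mathbf F_k\Bigr)
\end{equation*}
with Robin data $\kappa\nabla\theta\cdot\en+L(\ro,\theta)(\theta-\Theta)=-\sumk h_k\mathbf F_k\cdot\en=0$, by writing $\kappa=M(1+\theta^3)+r(1+\theta^3)$ and applying the elliptic regularity of \cite{DM95} together with \cite[Lemmas~A.2--A.3]{AMP18}, exactly as in the proof of the Lemma; the lower bound $\theta\ge T_0$ follows from the minimum principle using $\Theta\ge T_0$.

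The a priori estimates of the Lemma, re-read for this map, show that it sends $\mathcal B$ into $\{\Xi\le C_0\}\subset \mathcal B$ as soon as $M\ge M_0$, while the compact embeddings $W^{2,p}\Subset W^{1,p}$ and $W^{1,p}\Subset L^\infty$ make the map compact and continuous in the chosen topology. Schauder's theorem yields a fixed point, and bootstrapping the species and temperature equations (whose right-hand sides are now in $L^p$) upgrades $Y_k$ and $\theta$ to $W^{2,p}$, delivering the regularity claimed in Theorem~\ref{main}. The step I expect to be the most delicate is the construction of the temperature map: maintaining positivity of $\theta$ together with the estimate $\|\theta\|_{1,p}\le C$ uniformly in the iteration requires the density-dominated decomposition of $\kappa$ and the absorption arguments from the Lemma to go through at every iterate, which is feasible only because $\|r\|_\infty\ll M$ has been built into $\mathcal B$. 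The compatibility condition \eqref{compat} is not imposed on the approximation but, as observed in the Remark, is automatically recovered for the fixed point by integrating \eqref{Yks} and using \eqref{BC1}, \eqref{BC3}; the admissibility of the target values of $(\vek Y,\theta)$ chosen by the Remark is what allows the auxiliary truncation of $\omega_k$ used in step~two to be removed in the final solution.
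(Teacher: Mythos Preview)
Your scheme has two genuine gaps that the paper's construction is specifically designed to avoid.

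\textbf{Solvability of the species step.} In your step two you want to solve
\[
-\Div\bigl(D(\ro,\bar\theta,\bar{\vek Y})\nabla Y_k\bigr)+\ro\u\cdot\nabla Y_k=\ro\,\omega_k(\bar{\vek Y},\bar\theta),\qquad D\nabla Y_k\cdot\en=0,
\]
with $(\ro,\u)$ already fixed so that $\Div(\ro\u)=0$. The formal adjoint has the constant function in its kernel, so by the Fredholm alternative this problem is solvable only if $\int_\Omega\ro\,\omega_k(\bar{\vek Y},\bar\theta)\,\dx=0$. For an arbitrary iterate $(\bar{\vek Y},\bar\theta)\in\mathcal B$ this fails; the compatibility \eqref{compat} is a property of the \emph{final} solution, not of every point of $\mathcal B$. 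Even when it is solvable, $Y_k$ is determined only up to an additive constant, so neither $Y_k\ge 0$ nor $\sumk Y_k=1$ can be enforced as you describe. The paper breaks this degeneracy by adding the terms $-\epsilon\log Y_k-\delta Y_k+\delta/n$ to the right-hand side (equation \eqref{eq50}); the extra zero-order pieces make the linear problem uniquely solvable for every iterate, and the identity $\sumk Y_k=1$ is not preserved along the iteration but recovered only in the limit $\epsilon=\delta^3\to 0$ via the estimate \eqref{sigma}.

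\textbf{Positivity of the temperature.} You invoke a minimum principle to get $\theta\ge T_0$, but the paper states explicitly that, in contrast to the single-constituent case of \cite{AMP16,AMP18}, the comparison principle for $\theta$ is \emph{not} available here. The reason is the mixture contributions on the right-hand side of the internal-energy equation: at an interior minimum of $\theta$ one still has the sign-indefinite terms $-\ro\theta\Div\u$, $-\ro\theta\,\u\cdot\nabla\bigl(\sumk c_{vk}Y_k\bigr)$ and $-\Div\bigl(\sumk h_k\mathbf F_k\bigr)$, so Hopf/strong minimum arguments do not close. This is precisely why the paper passes to the logarithmic unknowns $z=\log\theta$, $w_k=\log Y_k$ and introduces the $(\epsilon,\delta)$-regularised system \eqref{eq7}--\eqref{eq0}: positivity of $\theta$ and $Y_k$ is then built into the unknowns, and the needed a~priori bounds on $\log\theta$, $\log Y_k$ come from the approximate entropy balance \eqref{entropl}.

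In short, the paper replaces your direct Schauder map on $\mathcal B$ by a Leray--Schauder homotopy $\mathcal F_\lambda$ acting on $(\u,\log\theta,\log Y_k)$ with the two regularisation parameters; this is not cosmetic but essential, because it simultaneously fixes the Fredholm obstruction in the species block and supplies the missing lower bounds for $\theta$ and $Y_k$. Your self-mapping argument for $\Xi$ and the use of \cite[Lemma~2.1]{AMP18} for the continuity--momentum block are fine and match the paper, but without the $(\epsilon,\delta)$-layer the iteration cannot be set up.
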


Note that the preceding computations were rather instructive, they do not provide any proof of the result. Moreover, we used the assumption that $\|r\|_\infty \ll M$ which was not guaranteed above and must be shown independently while proving the theorem.




\section{Construction of the solutions}

In contrast to the single-constituted fluid case studied in \cite{AMP16} or \cite{AMP18}, we are not able to deduce here the comparison principle for the temperature. Therefore, inspired by previous works on compressible mixtures and in order to guarantee the non-negativity of temperature as well as the specific concentrations, we introduce two approximation parameters $\epsilon,\delta>0$. At the first stage we search rather for the logarithms than for the quantities themselves.

For the construction of the solutions we will use the homotopy argument due to Leray and Schauder from their celebrated paper \cite{LeSc34}. It states that a family of mappings $\mathcal{F}_\lambda$ on a Banach space $X$ has at least one fixed point for each value of $\lambda\in[0,1]$, as soon as the following three conditions are satisfied:
\begin{enumerate}
\item For $\lambda=0$ the existence and uniqueness of the fixed point is guaranteed.
\item Mapping $\mathcal{F}_\lambda$: $X\to X$ is continuous and compact for each $\lambda \in [0,1]$, and depends continuously on $\lambda$.
\item We have uniform a priori bounds on the fixed points of $\mathcal{F_\lambda}$, id est, on solutions to $\mathcal{F_\lambda} (x) =x$ for any $\lambda \in [0,1]$.
\end{enumerate}
We set in our situation
$$
X = \{(\u, \theta, \vec{Y}), \u \in W^{1,\infty}(\Omega;\R^3); \theta, \log \theta \in W^{1,\infty}(\Omega); \vec Y, \{\log Y_k\}_{k=1}^n \in W^{1,\infty}(\Omega;\R^n)\}.
$$

For fixed $\epsilon$  and $\delta$ positive we will aim to apply the theorem on the family
$$\mathcal{F}_\lambda\colon (\overline{\u},\overline{\theta}, \overline{\vec Y}) \mapsto ({\u},{\theta}, {\vec Y})$$
 with $\mathcal{F}_\lambda$ being a solution operator of the following system
\begin{align}
     \Div(\ro \u) = &\:  0,\label{eq47}  \\
     \lambda \ro \overline{\u} \cdot\nabla   \u - \Div \field S(\ro,\nabla\u)  +   \nabla  \pi\big(\ro,\lambda\tfrac{\overline{\theta}}{g(\|\overline\theta\|_{1,p})}\big)=&\: \ro\mathbf{f},  \label{eq48} \\
    -\Div\Bigl( \frac{\delta + \theta}{\theta} \overline{\kappa}_\lambda \nabla \theta\Bigr) =\field{S}(\ro,\nabla \u):\nabla\u &\: \nonumber \\
		-  \lambda \ro  \frac{\overline{\theta}}{g(\|\overline\theta\|_{1,p})}\Div\u - &\:  \lambda \ro \u \cdot\nabla \widetilde{e}_m(\ro,\overline{\theta},\overline{\vec Y})- \lambda \Div\bigl(\sumk\overline{\theta} c_{pk} {\mathbf{F}}_{k\lambda}  \bigr)   , \label{eq49}\\
 \lambda \frac{\ro \overline{\u} \cdot\nabla \overline{Y_k}}{g(\|\overline\theta\|_{1,p})}    + \Div \bigl(\mathbf{F}_{k\lambda}  \bigr)  =\: \lambda \ro \omega_k\bigl(\overline{\vec Y},\overline{\theta}\bigr) - \epsilon \log Y_k - &
  \delta  Y_k  +  \dfrac{\delta}{n}, \quad k=1,2,\ldots,n, \label{eq50}
\end{align}
where
$$
g(\|\overline\theta\|_{1,p}) = \max\big\{1,\tfrac{\|\overline{\theta}\|_{1,p}}{C_0}\big\}
$$
with $C_0$ defined in \eqref{MainIneq}, 
\begin{equation} \label{eq50.a}
\widetilde{e}_m(\ro,\overline{\theta},\overline{\vec{Y}}) = \frac{\overline{\theta}\sum_{k=1}^n c_{vk} \overline{Y}_k}{g(\|\overline\theta\|_{1,p})},
\end{equation}
and the corresponding boundary conditions
\begin{align}
\mathbf{F}_{k\lambda} \cdot \en = \:& 0, \quad k=1,2,\ldots,n, \\
- \frac{\delta + \theta}{\theta} \overline{\kappa}_\lambda \nabla \theta \cdot \en  = \:&   L_\lambda(\ro,\overline{\theta})(\theta-\Theta) + \epsilon \log \theta ,\\
 \u\cdot\en = \:& 0, \label{uen}\\
 \en \cdot \field{S}(\ro,\nabla \u) \cdot \boldsymbol{\tau}^l + f \u \cdot \boldsymbol{\tau}^l = \:& 0, \quad l=1,2, \label{slip}
\end{align}
where we have denoted  $\overline{\kappa}_\lambda=\kappa_0M(1+\theta^3)+\lambda (\kappa(\ro,\overline{\theta}) - \kappa_0 M(1+\theta^3))$, $D_\lambda = D_0M+\lambda (D - D_0M),$ and similarly for $L_\lambda = L_0M(1+\theta^3)+\lambda (L - L_0M(1+\theta^3))$ ($L_0$ and $D_0$ are positive numbers), $${\mathbf{F}}_{k\lambda} = - \Bigl(D_\lambda(\ro,\overline{\theta},\overline{\vek{Y}}) + \dfrac{\epsilon}{Y_k}\Bigr) \nabla Y_k .$$
As mentioned above, we need to reformulate the problem in terms of logarithms of temperature and concentrations, this reads
\begin{align}
     \Div(\ro \u) = &\:  0,\label{eq7}  \\
     \lambda \ro \overline{\u} \cdot\nabla   \u - \Div \field S(\ro,\nabla \u)  +   \nabla  \pi\big(\ro,\lambda\frac{\mathrm{e}^{\overline{z}}}{g(\|\mathrm{e}^{\overline{z}}\|_{1,p})}\big)=&\: \ro\mathbf{f},  \label{eq8} \\
    -\Div\Bigl( (\delta + \mathrm{e}^{z}) \overline{\kappa}_\lambda \nabla z\Bigr) =\field{S}(\ro,\nabla\u) :\nabla\u  \nonumber\\
		-\lambda \ro  \frac{\mathrm{e}^{\overline{z}}}{g(\|\mathrm{e}^{\overline{z}}\|_{1,p})}\Div\u - &\: \lambda \ro \u \cdot\nabla \widetilde{e}_m\bigl(\ro,\mathrm{e}^{\overline{z}},\mathrm{e}^{\overline{\vec w}}\bigr) - \lambda \Div\Bigl(\sumk\mathrm{e}^{\overline{z}} c_{pk} {\mathbf{F}}_{k\lambda}  \Bigr)   , \label{eq9}\\
 \epsilon w_k- \Div \biggl( \Bigl(D_\lambda\mathrm{e}^{w_k} +\epsilon \Bigr) \nabla w_k \biggr)  =&\:-\lambda \frac{\ro \overline{\u} \cdot\nabla  \mathrm{e}^{\overline{w_k}}}{g(\|\mathrm{e}^{\overline{z}}\|_{1,p})} + \lambda \ro \omega_k\bigl(\mathrm{e}^{\overline{\vec w}},\mathrm{e}^{\overline{z}}\bigr)  -  \delta  \mathrm{e}^{w_k}  +  \dfrac{\delta}{n},  \label{eq0}
\end{align}
$k=1,2,\ldots,n$,
with corresponding boundary conditions
\begin{align}
-\Bigl(D_\lambda\mathrm{e}^{w_k} +\epsilon \Bigr) \nabla w_k  \cdot \en = \:& 0 , \quad k=1,2,\ldots,n,\\
- (\delta + \mathrm{e}^z) \overline{\kappa}_\lambda \nabla z \cdot \en  = \:&   L_\lambda(\ro,\mathrm{e}^{z} )(\mathrm{e}^z-\Theta) + \epsilon z,\\
 \u\cdot\en = \:& 0, \\
 \en \cdot \field{S}(\ro,\nabla \u) \cdot \boldsymbol{\tau}^l + f \u \cdot \boldsymbol{\tau}^l = \:& 0, \quad l=1,2.
\end{align}

For $\lambda = 0$ the system reduces to
\begin{align}
     \Div(\ro \u) = &\:  0,\label{feq47}  \\
     - \Div \field S(\ro,\nabla\u)  +   \nabla  \pi(\ro,0)=&\: \ro\mathbf{f},  \label{feq48} \\
    -\Div\Bigl(     (\delta + \mathrm{e}^z) {\kappa}_0 M(1+\mathrm{e}^{3z}) \nabla z \Bigr) = &\:\field{S}(\ro,\nabla \u):\nabla\u   , \label{feq49}\\
- \Div \biggl( \Bigl(D_0 M \mathrm{e}^{w_k} +\epsilon \Bigr) \nabla w_k \biggr)+ \epsilon w_k   + \delta \mathrm{e}^{w_k} =&\:\dfrac{\delta}{n}  , \quad k=1,2,\ldots,n \label{feq50}
\end{align}
with boundary conditions
\begin{align}
-\Bigl(D_0 M \mathrm{e}^{w_k} +\epsilon \Bigr) \nabla w_k \cdot \en = \:& 0, \quad k=1,2,\ldots,n, \label{beq1}\\
-  (\delta + \mathrm{e}^z)(1+\mathrm{e}^{3z})   \kappa_0 M \nabla z \cdot \en  = \:&   L_0 M(1+\mathrm{e}^{3z}) (\mathrm{e}^{z}-\Theta) + \epsilon  z ,\label{beq2}\\
 \u\cdot\en = \:& 0, \label{beq3}\\
 \en \cdot \field{S}(\ro,\nabla \u) \cdot \boldsymbol{\tau}^l + f \u \cdot \boldsymbol{\tau}^l = \:& 0, \quad l=1,2. \label{beq4}
\end{align}

\subsection{Properties of $\mathcal{F}_0$}

We start with the study of the case $\lambda=0$. We first show existence and uniqueness of the solution to this problem (it is also the fixed point here), then we verify that for suitably chosen constants $E$ and $C_{\mathbf{f}}$ the solution belongs to the balls defined below.

 The existence and uniqueness of fixed point for equations \refx{feq47}--\refx{feq48} with boundary conditions \eqref{beq3}--\eqref{beq4} follows from our previous works. For the reader's convenience we recall the corresponding Lemma 3.1 from \cite{AMP18}.  We use below the present notation.
\begin{lem}\label{Lem1}
Let $\Omega \in C^2$ be a bounded domain in $\field{R}^3$ which is not axially symmetric. Let $3<p<\infty$. Then there exist constants $C_\ef,\: E$ and $M_0>C_\ef$ dependent only on the given data, such that for any $M$ larger than $M_0$  problem \refx{feq47}--\refx{feq48} with boundary conditions \eqref{beq3}--\eqref{beq4}
admits a unique solution $(r,\u)$ in the class $M_r\times M_\u \cap W^{2,p}(\Omega;\mathbb{R}^{3})$ where
\begin{align*}
M_{\u}=&\Bigl\{ \vektor{f}\in  W^{1,\infty}(\Omega;\mathbb{R}^{3}), \:\vektor{f}\cdot\en={0} \text{ on }\partial \Omega, \Bigr.\nonumber\\ \Bigl.&\qqquad\qqquad \bigl\|{\nabla\vektor{f}}\bigr\|_2 \leq E, \: \|{\nabla \vektor{f}}\|_\infty+\norm{ \vektor{f}}_\infty   \leq C_\ef \Bigr\}, \\
M_r=& \biggl\{ f\in W^{1,p}(\Omega),\int_\Omega{f}\,\dx=0 , \: M^{\gamma-2}(\norm{f}_\infty+\norm{\nabla f}_p)\leq C_\ef \biggr\}.
\end{align*}
\end{lem}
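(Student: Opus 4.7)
The plan is to apply the Banach contraction principle on the ball $M_r \times M_\u$, following the method of decomposition of Novotn\'y and Padula. Given $(\bar r,\bar\u)\in M_r\times M_\u$, I would solve a linearization of \refx{feq47}--\refx{feq48} in which the variable viscosity coefficient is frozen at $\ro=M+\bar r$ and the pressure term is split into a leading-order piece $\gamma(M+\bar r)^{\gamma-1}\nabla r$ and a lower-order remainder. The unknowns $(r,\u)$ are then decoupled by applying the Helmholtz projectors $\PH$ and $\Pg$ to the momentum equation.

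Applying $\PH$ yields a generalized Stokes/Lam\'e system for $\PH\u$ with the Navier-type boundary condition \eqref{beq4}; elliptic regularity with variable coefficient, together with the Korn inequality \eqref{Korn1} (available because $\Omega$ is not axially symmetric), provides the $W^{2,p}$-bound on $\PH\u$. Applying $\Pg$ and using the constraint $\int_\Omega r\,\dx=0$ produces an elliptic equation for $r$ whose leading coefficient has size $M^{\gamma-1}$; inversion gives the characteristic scaling $\|r\|_{1,p}\lesssim M^{2-\gamma}\|\ef\|_p$, which is precisely the origin of the weight $M^{\gamma-2}$ in the definition of $M_r$. The continuity equation \refx{feq47}, recast as $M\,\Div\u=-\Div(r\u)$, then determines $\Pg\u$ in $W^{2,p}$, the smallness of $\|\bar r\|_\infty/M$ making this step well-posed.

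Once the linear solution operator is constructed with estimates respecting the prescribed scalings, I would verify that for $M\ge M_0$ sufficiently large relative to $\|\ef\|_p$, the induced nonlinear map $(\bar r,\bar\u)\mapsto(r,\u)$ sends $M_r\times M_\u$ into itself and is a contraction in a weaker topology such as $L^p\times W^{1,p}$. Largeness of $M$ is used to absorb the coupling terms $\bar r\,\bar\u$, $\bar r\,\ef$ and the higher-order pieces of $(M+\bar r)^\gamma$ against the leading linear response. The Banach fixed point theorem then produces a unique solution in $M_r\times M_\u$, and the linear estimates upgrade the regularity to $W^{1,p}\times W^{2,p}$.

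The main obstacle I foresee is the careful bookkeeping of the $M$-dependence in every estimate, so that all cross-terms and nonlinear corrections genuinely appear with an extra factor $M^{-\beta}$ for some $\beta>0$ relative to the dominant linear contributions; in particular, the Korn constant, the elliptic regularity constant with variable coefficient $M+\bar r$, and the operator norms of $\PH$ and $\Pg$ all have to be controlled uniformly as $M\to\infty$. Closing the argument therefore dictates the simultaneous choice $C_\ef\sim\|\ef\|_p$ and $M_0\gg C_\ef$, consistent with the smallness condition \refx{choice}.
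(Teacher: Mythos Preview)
The paper does not actually prove this lemma: it is quoted verbatim as Lemma~3.1 from \cite{AMP18}, and the authors explicitly remark that their approach ``is also based on the method of decomposition, developed \ldots\ by Novotn\'y and Padula, see \cite{NoPa}. This fact will be hidden here as we can directly use results from \cite{AMP16} and \cite{AMP18}.'' Your sketch is therefore in line with the method underlying the cited references. One caveat: in the Novotn\'y--Padula scheme as implemented in \cite{AMP16,AMP18} the density perturbation $r$ is not recovered from an elliptic equation obtained by applying $\Pg$ to the momentum balance; rather, $r$ is determined together with the effective viscous flux from a coupled Stokes-transport (or Stokes--Neumann) system, and the $L^p$ estimate on $r$ comes from a Bogovskii-type duality argument (compare the use of $\boldsymbol{\Phi}=\mathcal B[r_1-r_2]$ in the proof of Lemma~\ref{lem4} here). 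The leading-order scaling $\gamma M^{\gamma-1}$ and the resulting bound $M^{\gamma-2}\|r\|_{1,p}\lesssim\|\ef\|_p$ that you identify are correct and are the heart of the matter; you should just be aware that the actual machinery in the cited papers is a Leray--Schauder argument on a decomposed linear system rather than a direct Banach contraction, though for the $\lambda=0$ problem (no convective term) a contraction is also feasible.
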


In addition to the above defined subsets of function spaces we further define
 \begin{align*}
M_\theta=&\biggl\{ f\in W^{1,p}(\Omega),\: \|f\|_{9}^{\frac 32}  +  \|\nabla |f^\frac 32| \|_{2} \leq E,\: \|f\|_{1,p}\leq C_\ef \biggr\},\\
M_{\vek{Y}}=&\biggl\{ \vek{f}\in W^{1,p}(\Omega;\field{R}^n),\: \| \vec f \|_{1,2} \leq E,\: \| \nabla \vec f \|_{p}\leq C_\ef ,\:k = 1,\ldots, n \biggr\},
\end{align*}
where  $C_\ef$ and $E$ depend only on the given data $\Omega,$ $p$, $\ef$, $\Theta$.

We  now look for existence of solutions to \eqref{feq49} and \eqref{feq50} with the corresponding boundary conditions (or, equivalently, equations \eqref{eq49} and \eqref{eq50} with the corresponding boundary conditions and $\lambda =0$). Note that these solutions (together with $\u$) are in fact the fixed points of $\mathcal F_0$.

We first consider \eqref{feq50} with the corresponding boundary conditions \eqref{beq1}. We apply the method of the Kirchhoff transform. We fix $k \in \{1,2,\dots, n\}$ and define
$$
h(x) = D_0M \mathrm{e}^x +\epsilon, \qquad H(w_k) = \int_0^{w_k} h(x)\, \mathrm{d}x
$$
and set $W_k = H(w_k)$. Then \eqref{feq50} with the boundary conditions \eqref{beq1} transforms into
\begin{equation} \label{KT}
\begin{aligned}
-\Delta W_k + \epsilon H^{-1} (W_k) + \delta \mathrm{e}^{H^{-1}(W_k)} &= \frac{\delta}{n} \\
\frac{\partial W_k}{\partial \mathbf{n}} &= 0.
\end{aligned}
\end{equation}
Note that $H(y) \sim \varepsilon y$ for $y\ll -1$, $H(y) \sim \epsilon y+1$ for $y$ close to $0$ and $H(y) \sim \mathrm{e}^y$ for $y \gg 1$ and note that $H(\cdot)$ is increasing in $\R$. Therefore also $H^{-1}$ behaves linearly in $\R^-$ and near $0$ while $H^{-1}(y) \sim \log y$ for $y \gg 1$ and $H^{-1}(\cdot)$ is also increasing in $\R$.

Due to the properties of $H$ and $H^{-1}$ there exists unique solution to \eqref{KT} in $W^{1,2}(\Omega)$. Moreover, it is also not difficult to verify that $W_k\in W^{2,r}(\Omega)$ for any $1\leq r<\infty$. From here, by a simple bootstrapping argument, we easily obtain that that also $w_k$ and $\mathrm{e}^{w_k} \in W^{2,r}(\Omega)$.

In a very similar way we also get that $z$ and $\mathrm{e}^z$, where $z$ solves \eqref{feq49} with the boundary condition \eqref{beq2}, belong to $W^{2,r}(\Omega)$ for any $1\leq r<\infty$. Hence we see that the operator $\mathcal{F}_0$ is in fact a compact operator from $X$ to $X$. The continuity of the operator is trivial.

It remains to show that the constructed solution (and thus also the fixed point) lies within our class $M_\theta$, and $M_{\vek{Y}}$ respectively. This is very simple. As explained above,  the functions $z$ and $w_k$ are sufficiently regular. Moreover, we may rewrite the equations for $\theta:={\mbox e}^z$ and $Y_k:= {\rm e}^{w_k}$. Then it is straightforward to get the estimates for $w_k$, first by testing the equation by $w_k$ and then using the elliptic regularity, similarly as in the previous section. To conclude, note that we can obtain the entropy end the total energy identity in a simplified form. To do so, we test the thermal equation by $\mathrm{e}^{-z}$ which gives us
\begin{multline*}
\inte{ \kappa_0 M \dfrac{\delta + \mathrm{e}^z}{\mathrm{e}^z}(1+\mathrm{e}^{3z}) \abs{\nabla z}^2 }+\inte{ M \mathrm{e}^{-z}\abs{\nabla \u}^2 } \\
+ L_0 M \inth{ (1+\mathrm{e}^{3z}) \mathrm{e}^{-z}\Theta } \leq L_0 M \inth{ (1+\mathrm{e}^{3z}) }
\end{multline*}
and denoting  $\theta:={\mbox e}^z$ we get the entropy identity. Next, simply integrating the equation and summing it with the momentum equation tested by the velocity yields the total energy balance
$$ L_0 M\inth{ (1+\theta^3)( \theta - \Theta)  } + \delta \inth {\log \theta}   = \inte{ \ro \mathbf{f}\cdot \u }.$$
Since we control the integral of $\frac{1}{\theta}$ over the boundary, there is no problem to estimate the additional term and we may proceed exactly as in the previous section to show the estimates in lower order norms (estimates by $E$).
To finish, we use the elliptic regularity to estimate also the $L^p$-norm of the temperature gradient. To do so, it is enough to estimate $\frac 1M \|\field{S}(\ro,\nabla \u):\nabla \u\|_{\frac {3p}{3+p}}$. It reads
$$ \frac 1M \norm{\field{S}:\nabla\u}_{\frac{3p}{3+p}} \leq C\norm{\nabla\u}_{\frac{6p}{3+p}}^2  \leq  C\norm{\nabla\u}_{2}^{\frac{6p-6}{5p-6}} \norm{\u}_{2,p}^{\frac{4p-6}{5p-6}}  \leq  C E^{\frac{6p-6}{5p-6}} C_\ef^{\frac{4p-6}{5p-6}}.$$
Therefore, after possible enlargement of constant $C_\ef$, we obtain $z \in M_\theta$. To show that $\vec{Y}\in M_{\vec{Y}}$, where $Y_k = \mathrm{e}^{w_k}$, is even simpler.

\subsection{Properties of $\mathcal{F}_\lambda$: well posedness, continuity and compactness}

Let us show that the solution operator $\mathcal{F}_\lambda$ is for $\lambda \in (0,1]$ well-defined, continuous and compact from $X$ to $X$. Furthermore, we also need that the fixed points are bounded in this space, but this will be shown in the next subsection, based on estimates of the fixed points in
$$M_\u \times M_\theta \times M_{\vec Y}.$$
We start with problem \eqref{eq7}--\eqref{eq8} with the corresponding boundary conditions \refx{uen}--\refx{slip}. We may again recall the result from \cite[Lemma 3.1]{AMP18}

 \begin{lem}\label{Lem2}
Let $\Omega\subset \field{R}^3 $ be a $C^2$ bounded domain which is not axially symmetric. Let $3<p<\infty$.  Then
there exist constants $C_\ef,\: E$ and $M_0>C_\ef$ dependent only on the given data, such that for any $M$ larger than $M_0$ and for any $\overline{\u}\in M_{\u}$, and $\overline{z}, \mathrm{e}^{\overline{z}}\in W^{1,\infty}(\Omega)$, $\lambda\in (0,1]$, problem \refx{eq7}--\refx{eq8} with boundary conditions \refx{uen}--\refx{slip}
admits a unique solution $(r,\u)$ in the class $M_r\times M_\u \cap W^{2,p}(\Omega;\mathbb{R}^{3})$.
\end{lem}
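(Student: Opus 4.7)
The system \refx{eq7}--\refx{eq8} with boundary conditions \refx{uen}--\refx{slip} differs from the single-constituent momentum/continuity problem treated in \cite{AMP18} only in the replacement of the pressure argument $\theta$ by the cut-off effective temperature $\widetilde{\theta} := \lambda\, \mathrm{e}^{\overline{z}}/g(\|\mathrm{e}^{\overline{z}}\|_{1,p})$ and in the linearization of the convective term through the frozen $\overline{\u}\in M_{\u}$. The crucial observation that allows me to transfer the argument of \cite[Lemma 3.1]{AMP18} with only cosmetic modifications is that, by the very definition of $g$, one has $\|\widetilde{\theta}\|_{1,p}\leq C_0$ uniformly in $\overline{z}$. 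Writing $\nabla\pi(\ro,\widetilde\theta)=\gamma\ro^{\gamma-1}\nabla\ro+\widetilde{\theta}\nabla\ro+\ro\nabla\widetilde{\theta}$, the last two contributions can be moved to the right-hand side of \refx{eq8} and absorbed into a modified body force of the same analytical structure as $\ro\ef$, whose $L^p$-norm is controlled solely by the data.

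My plan is then to reproduce the fixed-point construction of \cite{AMP18} on the closed ball $M_r\times M_\u$. Given a trial $r^{(n)}\in M_r$, I would freeze $\ro=M+r^{(n)}$ in the viscous term and in the highest-order part of the pressure gradient, and solve the resulting linear Stokes-type problem with slip boundary conditions \refx{slip} for $\u^{(n+1)}$. This is done via the Helmholtz/Novotn\'y--Padula decomposition: the divergence-free part $\PH\u^{(n+1)}$ solves a classical Stokes-slip problem for which $L^p$ elliptic regularity and the Korn inequality \refx{Korn1} yield $\|\u^{(n+1)}\|_{2,p}\leq C(\|\ef\|_p+\|\nabla\widetilde{\theta}\|_p)$, while the gradient part together with the continuity equation $\u^{(n+1)}\cdot\nabla r^{(n+1)}+(M+r^{(n+1)})\Div\u^{(n+1)}=0$ determines the next density iterate. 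The coefficient $\gamma(M+r)^{\gamma-1}$ in front of $\nabla r$ in the pressure gradient produces the characteristic scaling $M^{\gamma-2}\|r^{(n+1)}\|_{1,p}\leq C(\ef,\Theta)$, which is exactly what the class $M_r$ demands.

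To close the argument I would pick $M_0$ so large that every term proportional to $\|r^{(n)}\|_\infty/M$ acts as a small perturbation; the embedding $W^{1,p}(\Omega)\hookrightarrow L^\infty(\Omega)$ together with the scaling $\|r\|_{1,p}=O(M^{2-\gamma})$ guarantees this for $M\geq M_0$. With these estimates the iteration stays inside $M_r\times M_\u$, and writing the equations for the difference of two iterates produces a contraction factor of order $M^{-1}$, so the Banach fixed-point theorem yields existence and uniqueness; a routine bootstrap via $L^p$ elliptic regularity raises the regularity to $W^{2,p}(\Omega;\R^3)$. The main obstacle is the standard one in the Novotn\'y--Padula scheme: the smallness conditions on $r$ and on $\u$ must be closed simultaneously, which forces careful bookkeeping of which constants depend on $M$ and which depend only on the data. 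The new temperature-like input $\widetilde{\theta}$ contributes nothing essentially new, because the normalization by $g$ keeps it uniformly bounded in $W^{1,p}(\Omega)$ and therefore plays the role of a benign additive perturbation of the body force.
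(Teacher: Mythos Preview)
Your proposal is correct and follows the same approach as the paper: the authors simply invoke \cite[Lemma~3.1]{AMP18} and remark that the normalization by $g(\cdot)$ keeps the effective temperature bounded in $W^{1,p}(\Omega)$, which is precisely the key observation you single out. Your outline of the Novotn\'y--Padula decomposition and the contraction argument merely spells out what the paper imports wholesale from \cite{AMP18}.
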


Note that it is important here that we may estimate the "temperature" in $M_\theta$ which is a consequence of the regularization of the equation for the temperature by the presence of the function $g(\cdot)$.

The unique solvability of problem \refx{eq9}--\refx{eq0} with the corresponding boundary conditions follows similarly as in the previous subsection. We only solve first the equation for $w_k$ and then put into $\mathbf{F}_{k\lambda}$ the found $\vec{w}$ and solve the equation for $z$.

 The regularity of the solutions to the elliptic equation yields the compactness of the mapping as the solution lies in $W^{2,p}(\Omega)$ which is compactly embedded into $W^{1,\infty}(\Omega)$.

It remains to verify the continuity of the family of operators $\mathcal{F}_\lambda$, both in $\lambda$ and, for fixed $\lambda \in [0,1]$, from $X$ to $X$.
First, let us show the continuity in $\lambda$, more precisely

 \begin{lem} \label{lem4}
The mapping $\mathcal{F}_\lambda$: $(\overline{\u},\overline{\theta}, \overline{Y_k}) \mapsto ({\u},{\theta}, {Y}_k)$ is continuous in $\lambda$ as the operator from $[0,1]$ to $X$.
 \end{lem}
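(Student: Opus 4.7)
The plan is a compactness-and-uniqueness argument in $\lambda$. Fix an input $(\overline{\u},\overline{\theta},\overline{\vec Y})\in X$, which in the logarithmic variables reads $(\overline{\u},\overline{z},\overline{\vec w})$, and take a sequence $\lambda_n\to\lambda_0$ in $[0,1]$. Set $(\u_n,z_n,\vec w_n):=\mathcal{F}_{\lambda_n}(\overline{\u},\overline{\theta},\overline{\vec Y})$; this is well defined by the preceding well-posedness discussion. The aim is to show $(\u_n,z_n,\vec w_n)\to \mathcal{F}_{\lambda_0}(\overline{\u},\overline{\theta},\overline{\vec Y})$ in $X$.

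The first step is a uniform $W^{2,p}$-bound on the whole triple. Because the input is frozen, the weight $g(\|\mathrm{e}^{\overline{z}}\|_{1,p})$ is a fixed positive constant, and the coefficients $\overline{\kappa}_{\lambda_n}$, $D_{\lambda_n}$, $L_{\lambda_n}$ depend affinely on $\lambda_n$, so they remain uniformly bounded and uniformly coercive. Lemma~\ref{Lem2} applied to \eqref{eq7}--\eqref{eq8} yields $\|\u_n\|_{2,p}+M^{\gamma-2}\|r_n\|_{1,p}\leq C$, and the $L^p$-elliptic estimates employed in the previous subsection for \eqref{eq9} and \eqref{eq0} give $\|z_n\|_{2,p}+\|\vec w_n\|_{2,p}\leq C$, all uniformly in $n$. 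Since $p>3$, the compact embedding $W^{2,p}(\Omega)\hookrightarrow W^{1,\infty}(\Omega)$ yields a subsequence (not relabelled) converging in $W^{1,\infty}$ to some limit $(\u_\star,z_\star,\vec w_\star)$, which is automatically in $X$ by continuity of $\exp$.

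I would then pass to the limit in \eqref{eq7}--\eqref{eq0} and the accompanying boundary conditions. The $\lambda$-dependence is affine, and the only nonlinear outputs entering these equations---$\u_n$, $z_n$, $\mathrm{e}^{z_n}$, $\vec w_n$, $\mathrm{e}^{\vec w_n}$ and their first derivatives---converge strongly in $W^{1,\infty}$, while second derivatives converge weakly in $L^p$; every term then passes without difficulty. Thus $(\u_\star,z_\star,\vec w_\star)$ solves the $\lambda_0$-system with the same frozen input. By uniqueness of $\mathcal{F}_{\lambda_0}(\overline{\u},\overline{\theta},\overline{\vec Y})$---Lemma~\ref{Lem2} for the momentum--continuity pair together with the monotone elliptic uniqueness recalled in the previous subsection for the quasilinear equation in $z$ and the $w_k$-equations (the $\epsilon w_k$ and Robin $\epsilon z$ terms providing coercivity)---the limit must coincide with $\mathcal{F}_{\lambda_0}(\overline{\u},\overline{\theta},\overline{\vec Y})$. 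Because every subsequence admits a further subsequence with the same limit, the full sequence converges in $X$.

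The most delicate point is the uniqueness invoked in the last step: the temperature equation is genuinely quasilinear, since both the principal coefficient $(\delta+\mathrm{e}^z)\overline{\kappa}_{\lambda_0}$ and the Robin term $L_{\lambda_0}(\mathrm{e}^z-\Theta)+\epsilon z$ depend on $z$ itself. Verifying it rests on the monotonicity of these nonlinearities together with the working assumption $\|r\|_\infty\ll M$, which is needed to dominate the $\lambda_0$-correction to the baseline $\kappa_0 M(1+\mathrm{e}^{3z})$. Everything else---uniform bounds, compactness, and the term-by-term passage to the limit---reduces to standard $L^p$-elliptic theory applied to equations whose coefficients are already under control.
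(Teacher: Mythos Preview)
Your compactness-plus-uniqueness argument is a legitimate route and, if the uniqueness step is granted, yields the conclusion. It is, however, \emph{not} what the paper does. The paper argues by direct energy estimates on differences: it subtracts the two systems at parameters $\lambda_1,\lambda_2$, tests the continuity-equation difference by $\gamma M^{\gamma-2}(r_1-r_2)$ and the momentum-equation difference by $\u_1-\u_2$, and then uses a Bogovskii test with $\boldsymbol{\Phi}=\mathcal{B}[r_1-r_2]$ to close the estimate, obtaining the quantitative bound $\|\u_1-\u_2\|_{1,2}^2+\|r_1-r_2\|_2^2\leq C(M)(\lambda_1-\lambda_2)^2$. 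Interpolation against the uniform $W^{2,p}\times W^{1,p}$ bound then gives $\|\u_1-\u_2\|_{1,\infty}\leq C|\lambda_1-\lambda_2|^\alpha$ for some $\alpha>0$; the $Y_k$- and $\theta$-differences are handled afterwards by energy and elliptic regularity in that order.

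The trade-off is clear. The paper's approach is more laborious---each term of the difference must be estimated by hand, and one needs the Bogovskii trick to recover control of $\|r_1-r_2\|_2$---but it delivers an explicit H\"older modulus in $\lambda$ and does not rely on any uniqueness statement for the quasilinear $z$-equation. Your argument is shorter and conceptually cleaner, but it hinges entirely on that uniqueness, which you correctly flag as the delicate point: the principal coefficient $(\delta+\mathrm{e}^z)\overline{\kappa}_{\lambda_0}$ and the Robin datum both depend on $z$, so one must exploit the monotone structure (and the smallness $\|r\|_\infty\ll M$ for the $\lambda$-perturbation of $\kappa$) carefully. Since the paper only asserts unique solvability of \eqref{eq9}--\eqref{eq0} in passing, your route effectively shifts the work rather than eliminating it.
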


\begin{proof}
Let us consider $\lambda_1,\:\lambda_2$ and the corresponding solutions ${\u}_i,{\theta}_i, {Y}_{k,i},\:i=1,2.$
\begin{align}
     \Div(\ro_i \u_i) = &\:  0  \label{lam1} \\
     \lambda_i \ro_i \overline{\u} \cdot\nabla   \u_i - \Div \field S(\ro_i,\u_i)  +   \nabla  \pi\Bigl(\ro_i,\lambda_i\frac{\overline{\theta}}{g(\|\overline\theta\|_{1,p})}\Bigr)=&\: \ro_i\mathbf{f}, \label{lam2} \\
    -\Div\Bigl( \frac{\delta + \theta_i}{\theta_i} \overline{\kappa}_{\lambda,i} \nabla \theta_i\Bigr) =\field S(\ro_i,\u_i):\nabla\u_i  \nonumber \\
		-\lambda_i \ro_i  \frac{\overline{\theta}}{g(\|\overline\theta\|_{1,p})}\Div\u_i - &\: \lambda_i \ro_i \u_i \cdot\nabla \widetilde{e}_m(\ro_i,\overline{\theta},\overline{Y_k}) - \lambda_i \Div\bigl(\sumk\overline{\theta} c_{pk} {\mathbf{F}}_{k\lambda, i}  \bigr)   \label{lam3}\\
 \lambda_i \frac{\ro_i \overline{\u} \cdot\nabla   \overline{Y_k}}{g(\|\overline\theta\|_{1,p})} + \Div \bigl(\mathbf{F}_{k\lambda, i}  \bigr)  =&\: \lambda_i \ro_i \omega_k\bigl(\overline{Y_k},\overline{\theta}\bigr) - \epsilon \log Y_{k,i} -
  \delta  \overline{Y_k}  +  \dfrac{\delta}{n}. \label{lam4}
\end{align}

Let us take \refx{lam1}$_1$--\refx{lam1}$_2$, multiply by $\gamma M^{\gamma-2}(r_1-r_2)$ and integrate over $\Omega$,  this yields after some integration by parts
\begin{multline}
\inte{\gamma M^{\gamma-1} (\u_1-\u_2)\cdot\nabla (r_1-r_2) }\\ = \gamma M^{\gamma-2} \inte{  \frac{\abs{r_1-r_2}^2}{2}\Div \u_1 + (r_1-r_2) (\u_1-\u_2)\cdot\nabla r_2  +   r_2(r_1-r_2) \Div(\u_1-\u_2)}. \label{help} \end{multline}

Let us further test the difference of equations \refx{lam2}$_1$--\refx{lam2}$_2$ by the difference $\u_1-\u_2$ to obtain the following terms:
$$ (\lambda_1-\lambda_2)\inte{\ro_1\overline{\u}\cdot\nabla \u_1 (\u_1-\u_2)}\leq  (\lambda_1-\lambda_2) \|\ro_1 \|_\infty \|\overline{\u}\|_\infty \|\nabla \u_1\|_2 \|\u_1-\u_2\|_{1,2}$$
$$  \lambda_2\inte{(\ro_1-\ro_2)\overline{\u}\cdot\nabla \u_1 (\u_1-\u_2)}\leq \lambda_2 \|r_1-r_2\|_2 \|\overline{\u}\|_\infty \|\nabla \u_1\|_2  \|\u_1-\u_2\|_{1,2}$$
$$  \lambda_2\inte{\ro_2\overline{\u}\cdot\nabla (\u_1-\u_2) (\u_1-\u_2)}  = - \lambda_2\inte{\bigl(\ro_2 \Div(\overline{\u}) + \overline{\u} \cdot \nabla r_2) \dfrac{ |\u_1-\u_2|^2}{2} } $$
$$ \inte{ (\ro_1-\ro_2) \nabla \u_1 : \nabla  (\u_1-\u_2)} \leq \|r_1-r_2\|_2 \| \nabla \u_1  \|_\infty  \| \u_1-\u_2 \|_{1,2}    $$
$$ \inth{ \ro_2 | \u_1-\u_2|^2 } +  \inte{ \ro_2 |\nabla (\u_1-\u_2)|^2} \geq \dfrac{M}{2}  \| \u_1-\u_2 \|_{1,2}^2    $$
 \begin{multline*}
 \inte{ (\ro_1^\gamma - \ro_2^\gamma) \Div(\u_1-\u_2)  } \\
 \leq \gamma M^{\gamma-2}\|r_1-r_2\|_2 \Bigl(\|r_1-r_2\|_2 \|\Div \u_1 \|_\infty+\|\nabla r_2\|_p \|\u_1-\u_2\|_{1,2}    + \|r_2\|_\infty \|\Div(\u_1-\u_2)\|_2  \Bigr)\end{multline*}
 $$(\lambda_1-\lambda_2)\inte{\ro_1\frac{\overline{\theta}}{g(\|\overline\theta\|_{1,p})} \Div(\u_1-\u_2)}\leq  (\lambda_1-\lambda_2) \|\ro_1\|_\infty \|\overline{\theta}\|_2 \|\u_1-\u_2\|_{1,2} $$
$$\lambda_2\inte{(r_1-r_2)\frac{\overline{\theta}}{g(\|\overline\theta\|_{1,p})} \Div(\u_1-\u_2)} \leq  \lambda_2 \| r_1 - r_2 \|_2 \|\overline{\theta}\|_\infty \|\u_1-\u_2\|_{1,2}  $$
$$  \inte{(\ro_1-\ro_2) \ef \cdot  (\u_1-\u_2)} \leq  \|r_1-r_2\|_2 \|\ef\|_3  \| \u_1-\u_2 \|_{1,2},$$
where for the difference of the pressures we used \refx{help}. This can be combined to get
$$ M \| \u_1-\u_2 \|_{1,2}^2  \leq C(M)(\lambda_1-\lambda_2)^2  +   M^{\gamma-2}\|r_1-r_2\|_2^2 \|\Div \u_1 \|_\infty + C (M^{2\gamma-5} + M^{-1})\| r_1-r_2\|_2^2. $$
The second term on the right-hand side can be estimated by means of the Bogovskii estimates, namely
we test the difference of momentum equations by $\boldsymbol{\Phi} = \mathcal{B}[r_1-r_2]$ yielding estimate for
$ \gamma M^{\gamma-1} \|r_1-r_2\|_2^2 .$

We get the following terms:
$$ \inte{(\lambda_1-\lambda_2) \rho_1  \overline{\u} \cdot\nabla \u_1 \cdot \boldsymbol{\Phi} } \leq  |\lambda_1-\lambda_2| M \|\overline{\u}\|_{3} \|\nabla \u_1\|_2 \|r_1-r_2 \|_2  $$
$$ \inte{\lambda_2 (r_1-r_2)  \overline{\u} \cdot\nabla \u_1\cdot \boldsymbol{\Phi} } \leq  \|r_1-r_2\|_2^2  \|\overline{\u}\|_{\infty} \|\nabla \u_1\|_3$$
$$ \inte{\lambda_2 \rho_2  \overline{\u} \cdot\nabla (\u_1-\u_2)\cdot \boldsymbol{\Phi} } \leq M \|\overline{\u}\|_3 \|\nabla (\u_1-\u_2)\|_2 \|r_1-r_2\|_2 $$
$$ \inte{ (r_1-r_2) \nabla \u_1: \nabla\boldsymbol {\Phi}} \leq \|r_1-r_2\|_2^2  \|\nabla \u_1\|_{\infty} $$
$$ \inte{ \rho_2 \nabla (\u_1 -\u_2) :\nabla\boldsymbol {\Phi}}\leq  M \|\nabla (\u_1 -\u_2)\|_2  \|r_1-r_2\|_2$$
$$\inte{ \gamma M^{\gamma-1}(r_1-r_2)^2 + (r_1-r_2)^2\lambda_1 \overline{\theta}} \geq  M^{\gamma-1} \|r_1-r_2\|_2^2 $$
$$\inte{\rho_2 (\lambda_1-\lambda_2) \frac{\overline{\theta}}{g(\|\overline\theta\|_{1,p})}(r_1-r_2)}  \leq M |\lambda_1-\lambda_2| \| \overline{\theta}\|_2 \|r_1-r_2\|_2 $$
yielding by virtue of Young's inequality and the fact that $C_{\ef} \ll M^{\gamma-1} $
$$   M^{\gamma-1} \|r_1-r_2\|_2^2 \leq   C M^{3-\gamma} \Bigl( (\lambda_1-\lambda_2)^2 + \|\nabla (\u_1-\u_2)\|_2^2 \Bigr) .   $$
Therefore, we get
$$ M\| \u_1-\u_2 \|_{1,2}^2  \leq C(M)(\lambda_1-\lambda_2)^2 +  C \|\u_1-\u_2\|_{1,2}^2 \|\Div \u_1 \|_\infty + C (M^{-1} + M^{3-2\gamma
})\| \u_1-\u_2 \|_{1,2}^2 $$
and due to choice of $M$ finally
$$ \| \u_1-\u_2 \|_{1,2}^2  +   \|r_1-r_2\|_2^2 \leq C(M)(\lambda_1-\lambda_2)^2.  $$
Since the velocity and the density belong to $W^{2,p}(\Omega;\R^3)\times W^{1,p}(\Omega)$, by interpolation, we get that
$$
\|\u_1-\u_2\|_{1,\infty} \leq C|\lambda_1-\lambda_2|^\alpha
$$
for some positive $\alpha$. Next, in a straightforward way, combining energy method and elliptic regularity, we first get from \eqref{lam4}
$$
\sum_{k=1}^n\|\log Y_{k,1}-\log Y_{k,2}\|_{1,\infty}+ \|\vec{Y}_1-\vec{Y}_2\|_{1,\infty} + \|\vec{Y}_1-\vec{Y}_2\|_{2,q}  \leq C|\lambda_1-\lambda_2|^\alpha
$$
for some $\alpha >0$ and any $q<\infty$,
and then, using particularly the last estimate (for the second derivatives) in \eqref{lam3}, we also get
$$
\|\log \theta_1-\log \theta_2\|_{1,\infty}+ \|\theta_1-\theta_2\|_{1,\infty}   \leq C|\lambda_1-\lambda_2|^\alpha.
$$
This completes the proof of this lemma.
\end{proof}

To finish this subsection, we have to show the continuity of the operator $\mathcal{F}_\lambda$ as the operator from $X$ to $X$ for arbitrary $\lambda \in [0,1]$. To do so, we can combine results from \cite{AMP18} (in particular, Lemma 3.4 and comments below it) with standard elliptic regularity for the equations \eqref{eq49} and \eqref{eq50} with the corresponding boundary conditions to show that
 \begin{lem} \label{lem5}
The mapping $\mathcal{F}_\lambda$: $(\overline{\u},\overline{\theta}, \overline{Y_k}) \mapsto ({\u},{\theta}, {Y}_k)$ is continuous from $X$ to $X$ for any $\lambda \in [0,1]$.
 \end{lem}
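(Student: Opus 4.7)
The plan is to mimic the structure of the previous lemma, only now $\lambda$ is frozen and the continuity is tested with respect to the bar-variables. Let $(\overline{\u}_j,\overline{\theta}_j,\overline{\vec{Y}}_j)$ converge to $(\overline{\u},\overline{\theta},\overline{\vec{Y}})$ in $X$ and denote by $(r_j,\u_j,\theta_j,\vec{Y}_j)$ and $(r,\u,\theta,\vec{Y})$ the associated fixed-point outputs. The aim is to show that all differences tend to zero in $W^{1,\infty}$ (plus $\|\log\theta_j-\log\theta\|_{1,\infty}+\sum_k\|\log Y_{k,j}-\log Y_k\|_{1,\infty}\to0$), which is precisely convergence in $X$. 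I would handle the three blocks one after another, exactly in the order they are solved.

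First, the continuity--momentum pair \eqref{eq7}--\eqref{eq8}. Subtracting the two systems, testing the momentum difference by $\u_j-\u$ and testing the continuity difference by $\gamma M^{\gamma-1}(r_j-r)$ (plus a Bogovskii corrector $\mathcal{B}[r_j-r]$ as in Lemma~\ref{lem4}), I would obtain
\[
\|\u_j-\u\|_{1,2}^2+\|r_j-r\|_2^2\le C(M)\bigl(\|\overline{\u}_j-\overline{\u}\|_{1,\infty}^2+\|\overline{\theta}_j-\overline{\theta}\|_\infty^2\bigr),
\]
after absorbing the small terms using $M$ large and $C_\ef\ll M^{\gamma-1}$. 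This is exactly the content invoked from \cite[Lemma~3.4]{AMP18}. Combining with the uniform $W^{2,p}\times W^{1,p}$ bound from Lemma~\ref{Lem2} and interpolating, one upgrades to $\|\u_j-\u\|_{1,\infty}+M^{\gamma-2}\|r_j-r\|_{1,p}\to 0$ with a Hölder rate in the input deviations.

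Second, the species equations \eqref{eq0}. With the previous step delivering convergence of $\ro_j$ and $\u_j$, the $w_k$-equation is a quasilinear elliptic equation with nonlinear coefficient $D_\lambda\mathrm{e}^{w_k}+\epsilon$ bounded away from zero by $\epsilon$ (this is what the $\epsilon$-regularisation is for). Writing the equation for $w_{k,j}-w_k$ and testing by this difference, using the monotonicity argument carried by $\epsilon w_k+\delta\mathrm{e}^{w_k}$ on the left and the Lipschitz continuity of $\omega_k$ and of the quantities $\mathrm{e}^{\overline{w_{k,j}}}, \mathrm{e}^{\overline{z}_j}, \ro_j\overline{\u}_j$ on the right, I would get an $H^1$-estimate of the form $\|w_{k,j}-w_k\|_{1,2}\le C\,\eta_j$ with $\eta_j$ a combination of input deviations plus $\|\u_j-\u\|_{1,\infty}+\|r_j-r\|_\infty$. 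Bootstrapping through elliptic $L^q$-regularity for $W_{k,j}-W_k$ (the Kirchhoff transformation used in the $\lambda=0$ analysis) and the Sobolev embedding $W^{2,q}\hookrightarrow W^{1,\infty}$ for $q>3$, one arrives at $\|w_{k,j}-w_k\|_{1,\infty}+\|Y_{k,j}-Y_k\|_{1,\infty}\to 0$.

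Third, the thermal equation \eqref{eq9} treated analogously: with $\ro_j,\u_j,Y_{k,j}$ already converging in the relevant norms, the leading coefficient $(\delta+\mathrm{e}^z)\overline{\kappa}_\lambda$ is uniformly elliptic (bounded below by $\delta\kappa_0 M$), which lets one test the difference $z_j-z$ against itself, absorb the non-principal nonlinear terms by Lipschitz estimates (using uniform $L^\infty$-bounds on $z_j$ from $M_\theta$ after applying the exponential), and then invoke elliptic $W^{2,q}$-regularity to conclude $\|z_j-z\|_{1,\infty}+\|\theta_j-\theta\|_{1,\infty}\to 0$. The continuity of $\mathcal{F}_\lambda\colon X\to X$ follows.

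The main obstacle is the thermal step: the source $\field{S}(\ro,\nabla\u)\!:\!\nabla\u$ is only quadratic in $\nabla\u$, so one genuinely needs the $W^{2,p}$-convergence of $\u$ produced in Step~1 (not just $H^1$), and one must keep track of the fact that $\overline{\kappa}_\lambda$ depends on the unknown $\theta$ itself through $\theta^3$. This is handled by splitting $\overline{\kappa}_\lambda\nabla z$ into $M\kappa_0(1+\mathrm{e}^{3z})\nabla z$ (the dominant, sign-definite part) and a perturbation controlled by $\|r_j-r\|_\infty/M\ll 1$, exactly as done after \eqref{appest1} in the a priori section. Once this decomposition is carried out, the rest reduces to standard Hölder and interpolation estimates analogous to those displayed in the proof of Lemma~\ref{lem4}.
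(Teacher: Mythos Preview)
Your proposal is correct and follows essentially the same approach the paper indicates: for the fluid part you invoke exactly the argument of \cite[Lemma~3.4]{AMP18} (as the paper does), and for the species and thermal equations you use standard elliptic regularity, which is precisely what the paper prescribes. The paper itself gives only this one-line sketch, so your write-up is in fact more detailed than the original while remaining faithful to its strategy.
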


\subsection{Boundedness of fixed points of $\mathcal{F}_\lambda$}

To conclude, we need to show that the possible fixed points of $\mathcal{F_{\lambda}(.)}$ are bounded in $X$. Our system reads (we {denote}  $\theta = \mathrm{e}^z , \: Y_k = \mathrm{e}^{w_k})$
 \begin{align}
     \Div(\ro \u) = &\:  0, \label{CEl} \\
\lambda \ro \u \cdot\nabla   \u - \Div \field S(\ro,\u)  +   \nabla  \pi\big(\ro,\lambda\frac{\theta}{g(\|\theta\|_{1,p})}\big)=&\: \ro\mathbf{f},   \label{MEl} \\
   -\Div\bigl( \mathcal{K} \nabla \theta\bigr) =\field{S}:\nabla\u -  \lambda \widetilde{\pi}_m(\ro,\theta) \Div\u - &\: \lambda \ro \u \cdot\nabla \widetilde{e}_m(\ro,\theta,Y_k) - \lambda \Div\Bigl(\sumk\theta c_{pk} \mathbf{J}_k  \Bigr)   ,\label{EEl}\\
\lambda \frac{\ro \u \cdot\nabla   {Y_k}}{g(\|\theta\|_{1,p})} + \Div  \mathbf{J}_k   =& \:\lambda \ro \omega_k-  \epsilon \log Y_k -  \delta  Y_k  +   \dfrac{\delta}{n}, \quad k=1,\ldots,n \label{Ykl}
\end{align}
with
$$\mathbf{J}_k = - \Bigl( D_\lambda + \dfrac{\epsilon + \delta Y_k}{Y_k} \Bigr) \nabla Y_k,\qquad \mathcal{K} =  \frac{\delta + \theta}{\theta}  {\kappa}_\lambda, \qquad \widetilde{\pi}_m(\ro,\theta) = \ro \frac{\theta}{g(\|\theta\|_{1,p})}$$
and $\widetilde{e}_m$ as in \eqref{eq50.a},
and boundary conditions
\begin{align}
\mathbf{J}_{k} \cdot \en = \:& 0, \quad k=1,2,\dots, n, \label{BC1l}\\
- \mathcal{K} \nabla \theta \cdot \en  = \:&   L_\lambda(\ro,\theta)(\theta-\Theta),\label{BC2l}\\
 \u\cdot\en = \:& 0,\label{BC3l}\\
 \en \cdot \field{S}(\ro,\nabla \u) \cdot \boldsymbol{\tau}^k + f \u \cdot \boldsymbol{\tau}^k = \:& 0.\label{BC4l}
\end{align}

Indeed, we have to bound not only $\theta$ and $\vec{Y}$, but also $\log \theta$ and $\{\log Y_k\}_{k=1}^n$. As a by-product we get that $(r,\u,\theta,\vec Y)$ are bounded in $M_r$, $M_{\u}$, $M_{\theta}$ and $M_{\vec Y}$, respectively, independently of $\lambda$. In particular, it will imply that for $\lambda=1$ the function $g(\|\theta\|_{1,p})$ equals to 1, id est, the fixed point is in fact (after the limit passages $\epsilon$ and $\delta \to 0$) a solution to our original problem. Moreover, as for $\epsilon$ and $\lambda >0$ we do not know that $\sum_{k=1}^nY_k=1$, we also show now that $\sum_{k=1}^nY_k-1 = o(\delta)$ provided $\epsilon = \epsilon(\delta)$, suitable (for our purpose, $\epsilon = \delta^3$ is enough).

We will closely follow the a priori estimates.


First of all,  we  deduce from \refx{Ykl} a useful estimate of quantity $\sigma_Y = \sumk Y_k$ which we finally want to be equal 1. We sum the equations for $k=1,\ldots, n$
\begin{multline*}
\lambda \frac{\ro \u \cdot\nabla (\sigma_Y-1)}{g(\|\theta\|_{1,p})} - \Div \Bigl( D_{\lambda}(\ro,{\theta},{\vek{Y}}) \nabla( \sigma_Y-1) \Bigr) - \delta \Delta (\sigma_Y -1)  +   \delta  (\sigma_Y -1)  \\ = \lambda \ro \sumk \omega_k - \epsilon \sumk \log Y_k +  \epsilon \sumk \Delta \log Y_k  ,
\end{multline*}
and test it by $(\sigma_Y-1)$ to get (recall $\sumk \omega_k =0 $)
\begin{multline}\inte{\bigl(\delta + D_\lambda(\ro,{\theta},{\vek{Y}})\bigr) |\nabla (\sigma_Y-1) |^2} +  \delta \inte{ (\sigma_Y-1)^2 } \\  \leq C \epsilon \sumk \inte{\Bigl(|\sigma_Y-1| |\log Y_k| +| \nabla \log Y_k||  \nabla(\sigma_Y-1)|\Bigr)},
\end{multline}
yielding
 \begin{equation}
 (M+\delta) \norm{\nabla(\sigma_Y-1)}_2^2 +  \delta \norm{\sigma_Y-1}_2^2 \leq  C \frac{\epsilon^2}{\delta} \sumk\biggl( \norm{\log Y_k}_2^2+\norm{\nabla \log Y_k}_2^2\biggr). \label{sigmaY}\end{equation}
In particular, we have
 $$\norm{\sigma_Y-1}_{1,2}^2 \leq  C \frac{\epsilon^2}{\delta^2} \sumk \norm{\log Y_k}_{1,2}^2.$$
We will be able to perform the limits $\epsilon, \:\delta \to 0+$ simultaneously. However, we need to guarantee not only that $\sigma_Y-1$ is bounded, but also that it vanishes in the limit. Therefore, we set for the rest of the proof\footnote{Actually, any power strictly larger than 2 would work in the same manner.} $$\epsilon = \delta^3$$  to get
\begin{equation} \norm{\sigma_Y-1}_{2}^2 \leq  C \delta^4 \sumk  \norm{\log Y_k}_{1,2}^2.\label{sigY}\end{equation}
Moreover, we obtain (as $Y_k>0$)
  \begin{equation}
   \norm{Y_k}_2^2 \leq \norm{\sigma_Y}_2^2 \leq \norm{\sigma_Y-1}_2^2 + \norm{1}_2^2 \leq  C \Bigl(1+ \delta^4  \sumk  \norm{\log Y_k}_{1,2}^2 \Bigr). \label{Yk2}
  \end{equation}
Observe also that we have
\begin{equation}
\sumk \mathbf{J}_k  = (D_\lambda + \delta) \nabla\sigma_Y + \delta^3 \sumk \nabla(\log Y_k),
\end{equation}
whence due to \refx{sigmaY}
\begin{equation}
\Bigl\|\sumk \mathbf{J}_k\Bigr\|_{2}  \leq M \norm{\nabla\sigma_Y}_2 + \delta^3 \sumk \norm{ \nabla(\log Y_k)}_2 \leq C  \sqrt{M}\delta^{5/2}  \sumk\norm{ \log Y_k}_{1,2}. \label{sumJk}
\end{equation}
Next, we multiply equation \refx{EEl} by $\dfrac{1}{\theta}$, yielding
\begin{multline}
\dfrac{2\ro\abs{\field D(\u)}^2}{\theta} + \dfrac{ \mathcal{K}\abs{\nabla \theta}^2 }{\theta^2} - {\Div\Bigl(\dfrac{ -\mathcal{K}\nabla \theta}{\theta} + \dfrac{\lambda \sumk h_k \mathbf{J}_k  }{\theta}\Bigr)}= \lambda {\dfrac{ \sumk h_k \mathbf{J}_k \cdot \nabla\theta }{\theta^2}} +\lambda{ \frac{\ro \Div\u }{g(\|\theta\|_{1,p})} }\\
+ \lambda \Bigl(  \frac{\ro \sumk c_{vk}Y_k  \u \cdot \nabla \theta}{g(\|\theta\|_{1,p})\theta}+\frac{\ro  \u}{g(\|\theta\|_{1,p})} \cdot  \sumk c_{vk}\nabla Y_k   \Bigr) , \label{entrl}
\end{multline}
and equation \refx{Ykl} by $\dfrac{g_k}{\theta}$; we obtain, see \refx{hk}--\refx{gk},
\begin{equation}
\lambda \frac{\Div(\ro Y_k \u)}{g(\|\theta\|_{1,p})}\dfrac{h_k-\theta s_k}{\theta} +{\Div \mathbf{J}_k \dfrac{g_k}{\theta} } =\lambda {\dfrac{\ro \omega_k g_k}{\theta}}  -  \epsilon\dfrac{ g_k\log Y_k}{\theta} - \delta \dfrac{ Y_k  g_k }{\theta}+  \delta \dfrac{ g_k}{n\theta} .
\end{equation}
This identity can be rewritten to
\begin{multline}
\lambda \frac{\ro  \u \cdot c_{pk} \nabla Y_k  }{g(\|\theta\|_{1,p})} -\lambda \frac{\Div(\ro Y_k \u) s_k}{g(\|\theta\|_{1,p})} + {\Div \Bigl(\mathbf{J}_k \dfrac{g_k}{\theta}\Bigr) }  -{\mathbf{J}_k \cdot\nabla \Bigl( \dfrac{g_k}{\theta}\Bigr) }  \\
 = \lambda {\dfrac{\ro \omega_k g_k}{\theta}}  - (c_{pk} - c_{vk} \log \theta + \log \ro + \log Y_k) \Bigl(\delta \bigl(Y_k-\frac{1}{n}\bigr)+\epsilon\log Y_k \Bigr). \label{Fklgk}
\end{multline}
Further,
\begin{equation*}
\inte{\frac{\ro  \u \cdot c_{pk} \nabla Y_k}{g(\|\theta\|_{1,p})}  } =\inte{\frac{\ro  \u \cdot c_{vk} \nabla Y_k}{g(\|\theta\|_{1,p})}  },
\end{equation*}
$$ \lambda\sumk\frac{\Div(\ro  \u  Y_k)s_k}{g(\|\theta\|_{1,p})} =\lambda\frac{\Div(\ro s  \u ) }{g(\|\theta\|_{1,p})}  - \lambda \dfrac{ \sumk  c_{vk} Y_k \ro  \u\cdot\nabla\theta}{g(\|\theta\|_{1,p})\theta} +  \lambda \frac{\sigma_Y \u  \cdot  \nabla\ro}{g(\|\theta\|_{1,p})}  + \lambda\frac{\ro\u \cdot \nabla \sigma_Y}{g(\|\theta\|_{1,p})} , $$
and
\begin{equation*}
-\sumk{\mathbf{J}_k \cdot\nabla \Bigl( \dfrac{g_k}{\theta}\Bigr) }
 =   {\sumk \dfrac{\mathbf{J}_k \cdot   c_{vk} \nabla \theta}{\theta}  } - {\sumk \dfrac{\mathbf{J}_k  \cdot \nabla Y_k}{Y_k}  }  - \sumk\mathbf{J}_k  \cdot \dfrac{\nabla \ro}{\ro} .
\end{equation*}
Thus, integrated version of \refx{Fklgk} reads
\begin{multline}
 \inte{\Div\Bigl(\sumk \mathbf{J}_k  \dfrac{g_k}{\theta} - \lambda  \frac{\ro s  \u}{g(\|\theta\|_{1,p})}  \Bigr) } +  \inte{\sumk \dfrac{\mathbf{J}_k   c_{vk} \cdot \nabla \theta}{\theta}  }+ \sumk  \inte{ \Big(\epsilon  |\log Y_k|^2  - \dfrac{\mathbf{J}_k   \cdot \nabla Y_k}{Y_k}\Big)}  \\ + \sumk \inte{ \delta \Bigl(1+(Y_k-\frac{1}{n})\log Y_k\Bigr) }
 =  \inte{ \Bigl(n \delta + \lambda \frac{\ro \u \cdot \nabla \sigma_Y}{g(\|\theta\|_{1,p})} +\lambda  \frac{\sigma_Y \u  \cdot  \nabla\ro}{g(\|\theta\|_{1,p})} \Bigr) }  \\  - \lambda  \inte{\sumk \frac{\ro  \u c_{vk} \cdot\nabla Y_k}{g(\|\theta\|_{1,p})}  }- \lambda \inte{     \dfrac{ \sumk  c_{vk} Y_k \ro  \u\cdot\nabla\theta}{g(\|\theta\|_{1,p})\theta}} +\inte{\sumk\mathbf{J}_k  \cdot \dfrac{\nabla \ro}{\ro}}
  \\ +  \lambda \inte{\sumk\ro \omega_k \dfrac{g_k}{\theta}}- \inte{\sumk (c_{pk} - c_{vk} \log\theta + \log \ro)\Bigl(\delta \bigl(Y_k-\frac{1}{n}\bigr) + \epsilon\log Y_k\Bigr)} ,\label{Eq66}
\end{multline}
while integrating \refx{entrl} gives us
\begin{multline*}
\inte{\dfrac{2\ro\abs{\field D(\u)}^2}{\theta} + \dfrac{ \mathcal{K} \abs{\nabla \theta}^2 }{\theta^2}} - \inte{\Div\Bigl(\dfrac{ -\mathcal{K}\nabla \theta + \lambda \sumk h_k \mathbf{J}_k   }{\theta}\Bigr)}\\=  \lambda \inte{\dfrac{ \sumk h_k \mathbf{J}_k  \cdot \nabla\theta }{\theta^2}}  +\lambda \inte{ \frac{\ro \Div\u }{g(\|\theta\|_{1,p})}}+ \lambda \inte{\dfrac{\ro  \u \cdot \nabla \bigl(\theta \sumk c_{vk}Y_k\bigr)}{\theta g(\|\theta\|_{1,p})}}.
\end{multline*}
We can conclude adding \refx{Eq66} and using the boundary conditions
\begin{multline*}
\inte{\dfrac{2\ro\abs{\field D(\u)}^2}{\theta} + \dfrac{\mathcal{K}\abs{\nabla \theta}^2}{\theta^2}}  -  \inte{\biggl[\sumk \dfrac{\mathbf{J}_k  \cdot \nabla Y_k}{Y_k}  + \epsilon  |\log Y_k|^2  +\delta \Bigl(1+\bigl(Y_k-\frac{1}{n}\bigr)\log Y_k\Bigr)\biggr] } \\ +\inth {L_\lambda \dfrac{\Theta}{\theta} }    = \inth {L_\lambda}+  \inte{\sumk (c_{pk} - c_{vk} \log\theta + \log \ro)\Bigl(\delta \Bigl(Y_k-\frac{1}{n}\Bigr) + \epsilon\log Y_k\Bigr)} \\
+\lambda \inte{\Bigl( n\delta + \frac{\ro (1-\sigma_Y)}{g(\|\theta\|_{1,p})}\Div \u\Bigr) }+ \sumk \inte{ \biggl(\lambda \ro \omega_k \dfrac{g_k}{\theta}+\mathbf{J}_k  \cdot \dfrac{\nabla \ro}{\ro} +  (c_{pk}-c_{vk})\mathbf{J}_k  \cdot \dfrac{\nabla \theta}{\theta}\biggr)} ,
\end{multline*}
whence, recalling $\epsilon = \delta^3$
\begin{multline*}
\inte{\dfrac{2\ro\abs{\field D(\u)}^2}{\theta} + \dfrac{\mathcal{K}\abs{\nabla \theta}^2}{\theta^2}}  + \lambda \inte{\sumk \dfrac{(D+\delta)|\nabla Y_k|^2}{Y_k} }  +\inth {L_\lambda \dfrac{\Theta}{\theta} }  \\ +\lambda \inte{\sumk \delta^3|\nabla\log Y_k|^2 +  \delta^3 |\log Y_k|^2  +\delta \bigl(1+Y_k\log Y_k\bigr) }  \leq   \inth {L_\lambda}+\lambda \inte{ \frac{\ro (1-\sigma_Y)\Div \u}{g(\|\theta\|_{1,p})} }\\+ \inte{ \biggl( \sumk \Bigl((c_{pk} - c_{vk} \log\theta + \log \ro)\Bigl(\delta \Bigl(Y_k-\frac{1}{n}\Bigr) +  \delta^3\log Y_k\Bigr)+\mathbf{J}_k  \cdot \dfrac{\nabla \ro}{\ro} +\mathbf{J}_k  \cdot \dfrac{\nabla \theta}{\theta} \Bigr)\biggr)} .
\end{multline*}
Let us estimate the lower order terms on the right-hand side of the last inequality. We have using \refx{sigY} (recall that $g\geq 1$)
  \begin{multline}
 \lambda \inte{\frac{\ro (1-\sigma_Y)\Div \u}{g(\|\theta\|_{1,p})}   }\leq  C \lambda  M \norm{1-\sigma_Y}_2 \norm{\Div\u}_2
   \leq C \lambda M \delta^2  \sumk\norm{ \log Y_k}_{1,2}\norm{\Div\u}_2 \\ \leq C  \lambda  M^2 \delta  \norm{\Div\u}_2^2 + \lambda \frac{\delta^3}{5} \sumk \norm{ \log Y_k}_{1,2}^2.
  \end{multline}
Similarly for the second term we have
\begin{equation}
\lambda \delta\inte{ (c_{pk} - c_{vk} \log\theta +  \log\ro)(Y_k-\frac1n) } \leq C\lambda\delta  \bigl(1+\| Y_k\|_2\bigr) \bigl(1+ \| \log\theta\|_{2} + M \bigr) ,
\end{equation}
and in order to eliminate the term we use for sufficiently small $\delta$, see \refx{Yk2}
$$ \norm{Y_k}_2^2 \leq C \Bigl(1+ \delta^4  \sumk  \norm{\log Y_k}_{1,2}^2 \Bigr) .$$
Next (recall that due to the construction we know that $\ro \geq \frac{M}{2}$)
\begin{multline}
\lambda \inte{ \delta^{3}(c_{pk} - c_{vk} \log\theta + \log \ro)\log Y_k } \leq \lambda \frac{\delta^{3}}{5} \norm{\log Y_k}_2^2 + C \lambda \delta^{3} \Bigl(1+\inte{\bigl(\log^2\theta + \log^2\ro\bigr)}\Bigr)\\ \leq \lambda \frac{\delta^{3}}{5} \norm{\log Y_k}_2^2 + C \lambda \delta^{3} \Bigl(M+\inte{\log^2\theta }\Bigr).
\end{multline}
Similarly, due to \refx{sumJk} and the fact that $r\in M_r$ we obtain
\begin{equation}
\lambda \inte{\sumk\mathbf{J}_k  \cdot \dfrac{\nabla \ro}{\ro} }\leq C\lambda \frac{\norm{\nabla r}_2}{M} \sqrt{M} \delta^{5/2} \sumk \norm{\log Y_k}_{1,2}  \leq  \lambda \frac{C\delta^2 }{M}  \norm{\nabla r}_2^2 +  \lambda\sumk\frac{\delta^3}{5} \norm{\log Y_k}_{1,2}^2 ,
\end{equation}
\begin{multline}
\lambda \inte{\sumk\mathbf{J}_k  \cdot \dfrac{\nabla \theta}{\theta} } \leq C\lambda \norm{\nabla \log\theta}_2 \sqrt{M} \delta^{5/2} \sumk \norm{\log Y_k}_{1,2} \\ \leq C\lambda \delta^2 M\norm{\nabla \log\theta}_2^2  +\lambda \frac{\delta^{3}}{5} \sumk \norm{\log Y_k}_{1,2}^2.
\end{multline}
Therefore, due to structural assumptions concerning $\kappa_\lambda$ and $L_\lambda$ we get after dividing by $M$
\begin{multline}
 \inte{ \Bigl( \dfrac{\abs{\field D(\u)}^2}{\theta} + \abs{\nabla \log\theta}^2+\lambda\theta\abs{\nabla \theta}^2+\sumk \lambda \dfrac{  \abs{\nabla Y_k}^2}{Y_k} \Bigr) } + \inth {\Bigl(\dfrac{1}{\theta}+ \lambda \theta^{2}\Bigr) }   \\ +\frac{\lambda}{M} \inte{\sumk\delta^3|\nabla\log Y_k|^2 + \delta^3 |\log Y_k|^2  +\delta \bigl(1+Y_k\log Y_k\bigr) }
 \leq C  \inth {\bigl(\lambda\theta^3+1\bigr) } \\  +  C \lambda \Bigl(\delta+\frac{1}{M}\inte{\bigl(\delta\theta + \delta^3\log^2\theta\bigr)} + \delta^2 \norm{\nabla\log\theta}_2^2+ \delta M \norm{\Div\u}_2^2  +\frac{\delta^2}{M^2} \norm{\nabla r}_2^2 \Bigr)  ,
\end{multline}
where the integral over $\Omega$ on the right-hand side can be absorbed by the left-hand side for $\delta$ sufficiently small, the same holds for the term with $\nabla \log\theta$. The last two terms are actually small due to the choice of $M$ and the definition of $M_\u$ and $M_r$ respectively. Therefore, we can write simply
\begin{multline}
 \inte{ \Bigl( \dfrac{\abs{\field D(\u)}^2}{\theta} + \abs{\nabla \log\theta}^2+\lambda\theta\abs{\nabla \theta}^2+\sumk \lambda \dfrac{  \abs{\nabla Y_k}^2}{Y_k} \Bigr) } + \inth {\Bigl(\dfrac{1}{\theta}+ \lambda \theta^{2} \Bigr)} \\ +\frac{\lambda}{M} \inte{\sumk\delta^3|\nabla\log Y_k|^2 + \delta^3 |\log Y_k|^2  +\delta \bigl(1+Y_k\log Y_k\bigr) }
 \leq  C \Bigl(\lambda \norm{\theta}_{L^{3}(\partial\Omega)}^3 +1\Bigr)  .\label{entropl}
\end{multline}
In particular
$$ \lambda \norm{\theta}_{9}^3 \leq C \lambda  \norm{\theta^{3/2}}_{1,2}^2 \leq C\lambda \biggl(\norm{\nabla(\theta^{3/2})}_{2}^2  +  \inth{\theta^3} \biggr)  \leq C \Bigl(\lambda \norm{\theta}_{L^{3}(\partial\Omega)}^3 +1\Bigr).$$
Furthermore, we have the total energy balance
$$ \inth{ \big(L_\lambda(\ro,\theta)(\theta-\Theta) + f \abs{\u}^2\big) }  = \inte{ \ro\mathbf{f}\cdot\u}  $$
\begin{equation}
\begin{split}
 \inth{\big(\theta + \lambda\theta^{4} +  \abs{\u}^2\big) }  &\: \leq C \Bigl(\inte{ \mathbf{f}\cdot\u}  + \inth{(1+\lambda\theta^{3})\Theta} \Bigr)   \\
 &\: \leq C \Bigl(\norm{\mathbf{f}}_{6/5}\norm{\u}_6 +   1 \Bigr) . \label{totall}
 \end{split}
 \end{equation}
 Therefore,
 $$\lambda \norm{\theta}_{9}^3 \leq  C \Bigl(\norm{\u}_6^{3/4}+1 \Bigr) .$$
 Looking back to the momentum equation, and multiplying by the velocity field yields
\begin{multline*}
 M  \norm{\nabla\u}_2^2+M\|\u\|_{L^2(\partial \Omega)}^2 \leq C \inte{ \bigl(\lambda \frac{\nabla(\ro\theta)}{g(\|\theta\|_{1,p})}\cdot\u + \ro\ef\cdot\u \bigr)}\\ \leq C \norm{\ro}_\infty \bigl( \lambda \norm{\theta}_2\norm{\Div\u}_2+\norm{\ef}_{6/5}\norm{\u}_6\bigr)
 \leq C (\Theta,\ef)\Bigl(\norm{\u}_6^{1/4}\norm{\Div\u}_2+1\Bigr),
 \end{multline*}
 and by Young's inequality
\begin{equation}
  \norm{\u}_{1,2}^2 \leq C (\Theta,\ef).
\end{equation}
Therefore, there exists constant $E$ dependent only on given data and independent of $\lambda$ and $\delta$ such that
\begin{equation}
  \norm{\u}_{1,2}  + \lambda \norm{\theta}_{9}^3+ \lambda  \norm{\theta}_{1,2}+\|\vek{Y}\|_{1,2}  + \norm{\log\theta}_{1,2}^2 + \lambda  \sumk\Bigl(\delta^3\norm{\log Y_k}_{1,2}^2  + \delta \norm{Y_k\log Y_k}_1\Bigr) \leq E. \label{ODHAD}
\end{equation}
The estimate of gradient $\vek{Y}$  independent of $\lambda$ comes simply from equation \refx{Ykl} tested by $Y_k$ and estimate \refx{Yk2}.
Furthermore, by \refx{sigmaY} we have
\begin{equation}
 \lambda \norm{\sigma_Y-1}_{1,2}^2 \leq \delta C E. \label{sigma}
 \end{equation}
The rest of the estimates closely follows the a priori approach. We obtain (simply considering $\lambda\theta$ instead of $\theta$)
\begin{equation}
 \norm{\u}_{2,p} + M^{\gamma-2}\norm{r}_{1,p} \leq C\bigl(\norm{\ef}_p + \lambda \norm{\nabla \theta}_p\bigr).
\end{equation}
Examining equation \refx{EEl} we get
\begin{equation*}
\begin{split}
M\norm{\theta}_{1,p} &\:\leq C \Bigl( M\norm{ \theta- \Theta }_{W^{-1/p,p}(\partial\Omega)} +  \norm{\field{S}:\nabla\u}_{\frac{3p}{3+p}} + \lambda\norm{ \widetilde{\pi}_m(\ro,\theta) \Div\u+\ro\u \cdot\nabla \widetilde{e}_m(\ro,\theta,\vec{Y}_k)}_{\frac{3p}{3+p}} \Bigr. \\ &\qquad\qquad\qquad\qquad\qquad\qquad\qquad\qquad\qquad\qquad\qquad\qquad\qquad\qquad \Bigl.+  \lambda \Bigl\|\sumk\theta c_{pk} \mathbf{J}_k\Bigr\|_p  \Bigr) \\
\norm{\theta}_{1,p} &\:\leq C \biggl( \norm{ \theta- \Theta }_{W^{-1/p,p}(\partial\Omega)} +  \norm{\nabla\u}_{\frac{6p}{3+p}}^2 + \lambda \norm{ \theta \Div\u}_{\frac{3p}{3+p}} +\lambda \norm{ \u \cdot\nabla \theta}_{\frac{3p}{3+p}} \\ &\qquad\qquad\qquad\qquad\qquad\qquad\qquad\qquad\qquad\qquad +\lambda \sumk \Bigl (\norm{ \u  \theta\cdot\nabla Y_k }_{\frac{3p}{3+p}}+ \frac{\lambda}{M}\norm{ \theta \mathbf{J}_k}_p   \Bigr) \biggr).
\end{split}
\end{equation*}
Let us estimate the terms on the right-hand side of the preceding inequality. We need to carefully interpolate between estimates coming from the entropy balance and the higher order norms. Using the H\"older and the Gagliardo--Nirenberg interpolation inequalities, we obtain
\begin{equation}
\norm{\nabla\u}_{\frac{6p}{3+p}}^2 \leq C \norm{\nabla\u}_{2}^{\frac{6p-6}{5p-6}} \norm{\u}_{2,p}^{\frac{4p-6}{5p-6}} \leq C \norm{\u}_{2,p}^{\frac{4p-6}{5p-6}}
 \end{equation}
\begin{equation}
\lambda \norm{\theta \Div \u}_{\frac{3p}{3+p}} \leq C \lambda \norm{\theta}_{9} \norm{ \nabla \u}_{\frac{9p}{9+2p}} \leq C \lambda \norm{\theta}_{9} \norm{ \nabla \u}_{2}^{1-t} \norm{  \u}_{2,p}^{t}
\end{equation}
\begin{equation}
\lambda\norm{\u \nabla \theta}_{\frac{3p}{3+p}} \leq C \norm{\u}_{6} \lambda \norm{ \nabla \theta}_{\frac{6p}{6+p}} \leq C \norm{\u}_{1,2}  \lambda \norm{ \nabla \theta }_{2}^{1-t'} \norm{ \nabla\theta}_{p}^{t'}
\end{equation}
with $t<1$ and $t'<1$. As also $\frac{4p-6}{5p-6}<1$, these three terms can be put directly to the left-hand side  by means of the Young inequality.
Further, we will use
$$\norm{\u}_\infty\leq C \norm{\u}_6^{1-r}\norm{\u}_{2,p}^r   \qquad\qquad  \lambda \norm{\theta}_\infty\leq C \lambda \norm{\theta}_9^{1-s}\norm{\theta}_{1,p}^s ,$$
where $r = \dfrac{p}{5p-6}$ and $s = \dfrac{p}{4p-9}.$ Thus,
\begin{equation}
\lambda \norm{ \u  \theta\cdot\nabla Y_k }_{\frac{3p}{3+p}} \leq \norm{ \u}_\infty\lambda \norm{\theta}_{9}\norm{\nabla Y_k }_{\frac{9p}{9+2p}} \leq C \norm{\u}_{2,p}^r  \norm{\nabla Y_k }_{\frac{9p}{9+2p}}.
\end{equation}
For $3<p\leq\frac{18}{5}$ we have $\frac{9p}{9+2p} \leq 2$ and therefore, we can absorb the term by the left-hand side. Otherwise, we interpolate
$$\norm{\nabla Y_k }_{\frac{9p}{9+2p}}  \leq C\norm{\nabla Y_k }_{2}^{1-\varsigma} \norm{\nabla Y_k }_{p}^\varsigma$$
with $\varsigma = \dfrac{5p-18}{9p-18}.$
Similarly,
$$\lambda \norm{ \theta \mathbf{J}_k }_p  \leq \lambda \norm{\theta}_\infty \norm{   \mathbf{J}_k }_{p} \leq C   \norm{\theta}_{1,p}^s \norm{   \mathbf{J}_k }_p . $$
Summing up the previous estimates yields
\begin{equation}
\norm{\theta}_{1,p} \leq C(\ef,\Theta) \Bigl(1+ \norm{\nabla Y_k }_{p}^{\frac{5p-6}{4p-6}\frac{ 5p-18 } { 9p-18} }   + \frac1M \norm{    \mathbf{J}_k }_p^{   \frac{4p-9}{3p-9}} \Bigr).
\end{equation}
Further, note that as $Y_k>0$
\begin{equation} M\norm{\nabla Y_k}_p+\delta^3 \norm{\nabla( \log Y_k)}_p  \leq C \norm{\mathbf{J}_k }_p \label{YkJk}
\end{equation} independently of $\lambda$, and
$$  \frac{5p-6}{4p-6}\frac{ 5p-18 } { 9p-18} < \frac{4p-9}{3p-9} \text{ for any }p>3,$$ whence
\begin{equation}
\norm{\theta}_{1,p} \leq C(\ef,\Theta) \Bigl(1  + \frac1M \norm{  \mathbf{J}_k }_p^{   \frac{4p-9}{3p-9}} \Bigr). \label{thetaFkl}
\end{equation}
Therefore, it remains to bound $\mathbf{J}_k$.  Equation \refx{Ykl} reads
$$ \Div\mathbf{J}_k = \lambda\ro  \omega_k -\lambda\frac{\Div(\ro  \u \cdot Y_k)}{g(\|\theta\|_{1,p})} - \delta^3 \log Y_k - \delta  Y_k  + \dfrac{\delta}{n} ,$$
and thanks to the Bogovskii-type estimates we get
\begin{equation}
 \frac{1}{M} \norm{\mathbf{J}_k }_p\leq C\bigl( \norm{ \omega_k}_{p} + \norm{\u}_{2p} \norm{ Y_k}_{2p} + \delta^3 M^{-1}\norm{\log Y_k}_p  + \delta M^{-1}\norm{ Y_k}_p \bigr).
\end{equation}
Using  the following interpolations
$$ \norm{\u}_{2p} \leq \norm{\u}_{6}^{1-q} \norm{\u}_{2,p}^q  \text{ with } q = \dfrac{p-3}{5p-6} ,\qquad  \norm{Y_k}_{2p} \leq \norm{Y_k}_{6}^{1-\beta} \norm{Y_k}_{1,p}^\beta  \text{ with } \beta = \dfrac{p-3}{3p-6} ,$$
$$ \norm{\log Y_k}_{p} \leq \norm{\log Y_k}_{2}^{1-t''} \norm{\log Y_k}_{1,p}^{t''}   \qquad  \norm{Y_k}_{p} \leq \norm{ Y_k}_{2}^{1-t'''} \norm{ Y_k}_{1,p}^{t'''}\text{ with } t'', t'''\in(0,1) ,$$
 we obtain
\begin{equation}
\frac{1}{M} \norm{\mathbf{J}_k }_p\leq
C\bigl( 1+ \lambda \norm{\theta}_{1,p}^q    \norm{ Y_k}_{1,p}^\beta + \delta^3 M^{-1}\norm{\log Y_k}_{1,p}^{t''} + \delta M^{-1}  \norm{ Y_k}_{1,p}^{t'''} \bigr)
\end{equation}
and with \refx{YkJk} in hands together with the Young inequality
\begin{equation}
 \frac{1}{M} \norm{\mathbf{J}_k }_p\leq C\bigl( 1+ \norm{\theta}_{1,p}^{\frac{p-3}{5p-6}\frac{3p-6}{2p-3}} \bigr).
\end{equation}
This yields due to \refx{thetaFkl}
\begin{equation}
\norm{\theta}_{1,p} \leq C(\ef,\Theta,\omega) \Bigl(1  +  \norm{  \theta}_{1,p}^{   \frac{4p-9}{3p-9}\frac{p-3}{5p-6}\frac{3p-6}{2p-3}} \Bigr).
\end{equation}
However,
$$ \frac{4p-9}{3p-9}\frac{p-3}{5p-6}\frac{3p-6}{2p-3}<1,\text{ for all }p>3.$$
Therefore, we can conclude
\begin{equation}
 \norm{\u}_{2,p} + M^{\gamma-2}\norm{r}_{1,p} +\norm{\theta}_{1,p}+\|\vek{Y}\|_{1,p} \leq C(\ef,\Theta,\omega_k). \label{ODHAD2}
 \end{equation}
Furthermore, coming back to \eqref{Ykl} with estimate \eqref{ODHAD2} available, it is not difficult to verify using standard elliptic regularity that also $\|\vec{Y}\|_{2,p}$ is bounded uniformly with respect to $\lambda$, $\epsilon$ and $\delta$ and $\{\log Y_k\}_{k=1}^n$ is bounded in $W^{2,p}$ uniformly with respect to $\lambda$ (however, not to $\epsilon$ and $\delta$). Using this fact, we use \eqref{EEl} and similarly get the bound for $\theta$ in $W^{2,p}(\Omega)$ uniformly with respect to $\lambda$, $\epsilon$ and $\delta$ and of $\log \theta$ uniformly with respect to $\lambda$ (but not with respect to $\epsilon$ and $\delta$). This finishes the proof of boundedness of possible fixed points of $\mathcal{F}_\lambda$ uniformly with respect to $\lambda$ and therefore $\mathcal{F}_1$ possesses at least one fixed point in $X$. Moreover, the fixed point fulfils
\begin{equation}
 \norm{\u}_{2,p} + M^{\gamma-2}\norm{r}_{1,p} +\norm{\theta}_{1,p}+\|\vek{Y}\|_{1,p} \leq C(\ef,\Theta,\omega_k) \label{ODHAD3}
 \end{equation}
with the right-hand side independent of $M$, $\epsilon$ and $\delta$. Therefore, for a suitable choice of $C_0$ in the definition of the function $g(\cdot)$ (and the connected choice of the lower limit of $M$) we see that the function $g =1$. Finally, using the elliptic regularity, we can also easily that
\begin{equation}
\norm{\nabla^2 \theta}_{p}+\|\nabla^2  \vek{Y}\|_{p} \leq C(M,\ef,\Theta,\omega_k). \label{ODHAD4}
 \end{equation}

\subsection{Proof of the main theorem}

It remains to perform the limit passage as $\delta \to 0+$; recall that we set $\epsilon = \delta^3.$ The available bounds \refx{ODHAD}, \refx{ODHAD3} and \refx{ODHAD4} are sufficient to manage all terms, including the nonlinear ones. In particular from \refx{sigma} we achieve the condition $\sumk Y_k=1,$
 $$\delta^3 \norm{\log Y_k}_{1,2}\leq \delta^{3/2} E^{1/2},$$
thus $Y_k\geq 0$ and consequently also $Y_k\leq1.$  Considerations for the other terms are similar and we skip the details. This completes the proof of our main result.

\vspace*{0.5cm}

\end{document}